\documentclass[sn-mathphys-num]{sn-jnl}
\usepackage{mathtools,amssymb,latexsym,amsthm,amsmath,amsfonts,enumerate}  

\newcommand{\Li}{{\mbox{Lip}}}
\newcommand{\cS}{{\mathcal S}}
\newcommand{\cC}{{\mathcal C}}
\newcommand{\ux}{\underline{x}}
\newcommand{\uy}{\underline{y}}
\newcommand{\E}{\textbf{E}}
\newcommand{\cP}{\mathcal{P}}
\newcommand{\uz}{\underline{z}}
\newcommand{\uzz}{\underline{\zeta}}
\newcommand{\f}{\tilde{f}}
\newcommand{\g}{\tilde{g}}
\newcommand{\R}{\mathbb{R}}
\newcommand{\bC}{\textbf{C}}
\newcommand{\cD}{{\mathcal D}}

\newtheorem{theorem}{Theorem}[section]
\newtheorem{lemma}[theorem]{Lemma}
\newtheorem{remark}[theorem]{Remark}
\newtheorem{definition}{Definition}[section] 

\begin{document}

\title[Hardy decomposition of higher order Lipschitz classes by polymonogenic functions]{Hardy decomposition of higher order Lipschitz classes by polymonogenic functions}

\author*[1]{\fnm{Lianet} \sur{De la Cruz Toranzo}}\email{lianetcruzt@gmail.com}
\author[2,3]{\fnm{Ricardo} \sur{Abreu Blaya}}\email{rabreublaya@yahoo.es}
\author[1]{\fnm{Swanhild} \sur{Bernstein}}\email{swanhild.bernstein@math.tu-freiberg.de}

\affil*[1]{\orgdiv{Institut für Angewandte Analysis}, \orgname{TU Bergakademie Freiberg}, \orgaddress{\street{Pr\"uferstr. 9}, \city{Freiberg}, \postcode{09599}, \state{Sachsen}, \country{Germany}}}
\affil[2]{\orgdiv{Facultad de Matem\'aticas}, \orgname{Universidad Aut\'onoma de Guerrero}, \orgaddress{\street{Ave. L\'azaro C\'ardenas}, \city{Chilpancingo}, \postcode{39087}, \state{Guerrero}, \country{Mexico}}}
\affil[3]{\orgdiv{Profesor invitado}, \orgname{Universidad UTE Ecuador}, \orgaddress{
\country{Ecuador}}}

\abstract{
In this paper we find a decomposition of higher order Lipschitz functions into the traces of a polymonogenic function and solve a related Riemann-Hilbert problem. Our approach lies in using a cliffordian Cauchy-type operator, which behaves as an involution operator on higher order Lipschitz spaces. The result obtained is a multidimensional sharpened version of the Hardy decomposition of H\"older continuous functions on a simple closed curve in the complex plane.}

\keywords{Cauchy-type integral operator, linear involution, higher order Lipschitz class,  Dirac operator, polymonogenic function, Whitney's extension theorem}

\pacs[MSC Classification]{30G35,31B10,47B38}

\maketitle
\section{Introduction}\label{sec:intro}
The space of H\"older functions of exponent $0<\alpha\leq 1$ defined on a set of the complex plane, has numerous properties and plays an important role in the theory of analytic functions. For instance, it makes sense to speak of a singular version of the Cauchy transform. Actually, by the Plemelj-Sokhotski's formulae, the boundary value problem on finding a sectionally analytic function vanishing at infinity and with jump on a closed smooth curve equals to a given H\"older function $f$, is solved by the Cauchy transform of $f$ \cite{Mu}. This result leads to the Hardy decomposition of the H\"older class as the sum of two disjoint H\"older spaces, which are characterized for the limiting values of the Cauchy transform as approaching the curve from the interior and the exterior domain.
When $\alpha=1$, the H\"older condition becomes the well-known Lipschitz condition, which has been widely used in the theory of differential equations, whereas when $\alpha>1$ the class of H\"older is constituted by constants only. However, a non-trivial space emerges under that condition if we consider the higher order Lipschitz functions. In \cite{AD} the authors found a decomposition of the Lipschitz class in the context of polyanalytic functions \cite{Ba}, that is, null solutions of the iterated Cauchy-Riemann equation. When looked at from a more general point of view, the null solutions to the iterated Dirac operator in Clifford analysis (see for instance,  \cite{BDS} and \cite{GS}), leads to polymonogenic functions and therefore a natural question to ask is whether such a decomposition continues to hold in this multidimensional setting. Very recently in \cite{DAB2}, while attempting to prove it, the authors obtained a decomposition of the first order Lipschitz class into traces of bimonogenic functions, which are nothing more than Clifford algebra-valued harmonic functions as the Dirac operator factorize the Laplacian. In this paper we bring together the methods carried out in the above mentioned works and prove a general decomposition of higher order Lipschitz classes into Clifford algebra-valued polymonogenic functions.
The organization of the paper is the following. In Section \ref{Sec2} we present some preliminaries about real and Clifford analysis. In Section \ref{Sec3} we summarize without proofs the relevant material on the Cauchy type integral operator and its singular version related to higher order Lipschitz functions and we extend Theorem 3 of \cite{AA} to the general order with the aid of a multi-index formula for the iterated Dirac operator. In Section \ref{sec_main} we state and prove our main results. Finally, we also solve a Riemann Hilbert problem for polymonogenic functions with given Lipschitz data. 

\section{Definitions and related results}\label{Sec2}
An $m$-tuple $j= (j_1,\dots ,j_m)$ with non-negative integer components $j_i$ for every $i=\overline{1,m}$ is called multi-index. Given a multi-index $j$ we recall: $x^j= x_1^{j_1}\cdots x_m^{j_m}$, $j!=j_1!\cdots j_m!$, $|j|=j_1+\cdots +j_m$ and $\partial^{j}=\partial^{j_1}_{ x_1}\dots \partial^{j_m}_{ x_m}.$ The reader is cautioned that, throughout this work, the letters $j$, $l$, $p$ and $q$ will be reserved for multi-indexes, while the letters $i$, $s$, $u$ and $v$ are reserved for indexes of summation.
\begin{definition}
Let $\E$ be a closed subset of $\R^m$, $k$ a non-negative integer and $0<\alpha\le 1$. We shall say that a real valued function $f$, defined on $\E$, belongs to the higher order Lipschitz class $\Li(k+\alpha,\E)$ if there exist real valued functions  $f^{(j)}$, $0<|j|\le k$, defined on $\E$, with $f^{(0)}=f$, which together with
\begin{equation}\label{L1}
		R_j(x,y)=f^{(j)}(x)-\sum_{|j+l|\le k}\frac{f^{(j+l)}(y)}{l!}(x-y)^{l},\quad x, y\in\E
\end{equation}
satisfy
\begin{equation}\label{L2}
		|f^{(j)}(x)|\le M,\quad|R_j(x,y)|\le M|x-y|^{k+\alpha-|j|},\quad x, y\in\E, |j|\le k,
\end{equation}
where here and subsequently $M$ is a positive constant not necessary the same at different occurrences.
\end{definition}
\begin{remark} We observe that the function $f^{(0)}$ does not necessarily determine the elements $f^{(j)}$ (see \cite[p. 176]{St}). Therefore, an element of $\Li(k+\alpha,\E)$ should be interpreted as a collection $f=\{f^{(j)}:\E\mapsto\R,\quad|j|\le k\}$. If no confusion can arise, we write $f$ for both the collection and the element $f^{(0)}$. We use superscripts to distinguish between individual elements and sub-collections. In this way, for all $|j|\leq k$, the data $f_{(j)}:=\{f^{(j+l)},\quad0\le|l|\le k-|j|\}$ belongs to $\Li(k+\alpha-|j|,\E)$. In particular, $f_{(j)}=\{f^{(j)}, \quad|j|=k\}$ belongs to the H\"older class $\bC^{\alpha}(\E)$. When $\E=\R^m$, however, the functions $f^{(j)}$ are uniquely determined by $f$ and $\Li(k+\alpha,\R^m)$ actually consists of continuous and bounded functions $f$ with continuous and bounded partial derivatives $\partial^{j}f=f^{(j)}$ up to the order $k$. In what follows, we shall concern ourselves only with case $0<\alpha<1.$
\end{remark}

The higher order Lipschitz class introduced by E. Stein in \cite{St} appears to be the most appropriate class of functions to describe properties in terms of Banach spaces of certain linear extension operators related to Whitney-type problems on extensions of real functions defined on closed sets of $\R^m$. For the convenience of the reader, we include the Whitney extension theorem \cite{Wh} as it will be a key point to define our multidimensional singular integral operator later. 

\begin{theorem}(Whitney's theorem)\label{thm_whit}
Let $f\in\Li(k+\alpha, \E)$.  Then, there exists a function $\f\in\Li(k+\alpha,\R^m)$ satisfying
\begin{enumerate}[(i)]
\item $\f|_\E = f^{(0)},\quad\partial^{j}\f|_\E=f^{(j)}$, $0<|j|\le k$,
\item $\f\in C^\infty(\R^{m} \setminus \E)$,
\item $|\partial^{j}\f(x) | \leqslant M \quad \mbox{\em dist}(x,\E)^{\alpha-1}$, for $|j|=k+1$ and $x\in\R^{m}\setminus\E$.
\end{enumerate}
\end{theorem}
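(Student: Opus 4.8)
The plan is to reproduce the classical Whitney construction: patch together local Taylor polynomials of the collection $f$ by a smooth partition of unity adapted to a Whitney decomposition of $\R^m\setminus\E$, and then verify the defining inequalities of $\Li(k+\alpha,\R^m)$ by exploiting the cancellation encoded in the remainder bounds \eqref{L2}. First I would decompose the open set $\Omega:=\R^m\setminus\E$ into a countable family $\{Q_i\}$ of closed dyadic cubes with pairwise disjoint interiors and $\operatorname{diam}Q_i\le\operatorname{dist}(Q_i,\E)\le 4\operatorname{diam}Q_i$, arranged so that the dilated cubes $Q_i^{*}:=\tfrac{9}{8}Q_i$ still lie in $\Omega$ and have bounded overlap. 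Subordinate to $\{Q_i^{*}\}$ I would take a smooth partition of unity $\{\varphi_i\}$ with $\sum_i\varphi_i\equiv 1$ on $\Omega$, $\operatorname{supp}\varphi_i\subset Q_i^{*}$, and $|\partial^{l}\varphi_i(x)|\le M(\operatorname{diam}Q_i)^{-|l|}$ for every $l$. For each $i$ I would pick a point $p_i\in\E$ realizing, up to a fixed factor, the distance from $Q_i$ to $\E$, set $P_i(x)=\sum_{|l|\le k}\tfrac{f^{(l)}(p_i)}{l!}(x-p_i)^{l}$ for the degree-$k$ Taylor polynomial of $f$ centred at $p_i$, and define
\[
\f(x)=\begin{cases} f^{(0)}(x), & x\in\E,\\[3pt] \displaystyle\sum_i\varphi_i(x)\,P_i(x), & x\in\Omega,\end{cases}
\]
with a routine modification for the cubes far from $\E$ (or a global cutoff when $\E$ is compact) to secure the uniform bounds on $\f$ and on its derivatives of order $\le k$. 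Property (ii) is then immediate, since on $\Omega$ the defining sum is locally finite with $C^{\infty}$ summands.

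The heart of the argument is to estimate $\partial^{j}\f$ on $\Omega$ and to check that, together with $f^{(j)}$ on $\E$, it forms an admissible collection for $\Li(k+\alpha,\R^m)$. Fixing $x\in\Omega$, picking a cube $Q_{i_0}\ni x$ and writing $d:=\operatorname{dist}(x,\E)$, the identity $\sum_i\varphi_i\equiv 1$ allows one to decompose, for $|j|\le k+1$,
\[
\partial^{j}\f(x)=\partial^{j}P_{i_0}(x)+\sum_{i}\sum_{0\le l\le j}\binom{j}{l}\,\partial^{l}\varphi_i(x)\,\partial^{j-l}\bigl(P_i-P_{i_0}\bigr)(x),
\]
where only boundedly many $i$ contribute, each with $\operatorname{diam}Q_i$ comparable to $d$, with $|x-p_i|$ comparable to $d$, and with $|p_i-p_{i_0}|\le Md$. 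Expanding $\partial^{j-l}(P_i-P_{i_0})$ in a Taylor series about $p_{i_0}$ and identifying its coefficients with signed remainders $-R_{q}(p_{i_0},p_i)$ of $f$, one gets from \eqref{L2} the bound $|\partial^{j-l}(P_i-P_{i_0})(x)|\le Md^{k+\alpha-|j-l|}$; multiplying by $|\partial^{l}\varphi_i(x)|\le Md^{-|l|}$ and summing the finitely many contributions yields $|\partial^{j}\f(x)|\le M$ when $|j|\le k$ and $|\partial^{j}\f(x)|\le Md^{\alpha-1}$ when $|j|=k+1$ (the term $\partial^{j}P_{i_0}$ vanishing in the latter case), which is exactly assertion (iii). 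The same device — inserting the intermediate polynomial $P_{i_0}$ and trading each derivative of $\varphi_i$ against a difference of Taylor polynomials of $f$ — applied to $R_j(x,y)$ whenever at least one of $x,y$ lies in $\Omega$ (the case $x,y\in\E$ being the hypothesis \eqref{L2}) produces $|R_j(x,y)|\le M|x-y|^{k+\alpha-|j|}$ in every case; letting $x\in\Omega$ tend to a point of $\E$ in these inequalities then shows that each $\partial^{j}\f$ extends continuously up to $\E$ with boundary value $f^{(j)}$, so $\f\in C^{k}(\R^m)$ and assertion (i) holds.

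The main obstacle is precisely this block of estimates. The derivatives $\partial^{l}\varphi_i$ blow up like $\operatorname{dist}(\cdot,\E)^{-|l|}$ as one approaches $\E$, so the construction survives only because each such blow-up is absorbed by the smallness of the neighbouring Taylor-polynomial differences $P_i-P_{i_0}$ — a smallness available solely from the full hierarchy of remainder inequalities in \eqref{L2}, not from the boundedness of the $f^{(j)}$ alone. Organizing the combinatorics over cubes and multi-indices so that the correct powers of $|x-y|$ come out in every case of $R_j(x,y)$, in particular in the mixed case $x\in\Omega$, $y\in\E$, is the laborious part; once the sharp estimate $|\partial^{j}\f(x)|\le M\operatorname{dist}(x,\E)^{\alpha-1}$ for $|j|=k+1$ is in hand, the continuity of the lower-order derivatives across $\E$ follows by a standard Taylor-with-remainder argument, and assertions (i)--(iii) are all established.
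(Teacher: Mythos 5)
The paper does not prove this theorem; it quotes it from Whitney \cite{Wh} and Stein \cite[Ch.\ VI]{St} and merely displays the extension operator $\sum_i' P(x,p_i)\varphi_i^*(x)$, which is exactly the construction you carry out. Your sketch is the standard Whitney--Stein argument (dyadic decomposition of $\R^m\setminus\E$, partition of unity, local Taylor polynomials, and absorption of the $\operatorname{dist}(\cdot,\E)^{-|l|}$ blow-up of $\partial^l\varphi_i$ by the remainder estimates \eqref{L2}), and it is correct and essentially identical to the proof the paper points to.
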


Actually, the linear extension operator 
\[
{\mathcal{E}}_k(f^{(j)})(x)=\begin{cases}
	f^{(0)}(x),&x\in\E\\ {\sum\limits_{i}}'P(x,p_i)\varphi_i^*(x),&x\in\E^c,
\end{cases}
\]
where $\sum_i\varphi_i^*=1$ and $P(x,y)$ denotes the Taylor expansion of $f$ about $y$, maps $\Li(k+\alpha,\E)$ continuously into $\Li(k+\alpha,\R^m)$ and the norm is independent of the closed set $\E$. For a deeper discussion we refer the reader to \cite[Ch. VI]{St}.

Although the preceding definition involves real-valued functions, we now introduce the algebras we shall be concerned with.  

\begin{definition}
Let us consider an orthonormal basis $\{e_i\}_{i=1}^m$ of $\R^m$ governed by the multiplication rules: $e_i^2=-1$, $e_ie_{j}=-e_{j}e_i, i\neq j$ for every $i,j=1,\dots, m.$ The real Clifford algebra $\R_{0,m}$ is that generated by $\{e_i\}_{i=1}^m$ over $\R$.
\end{definition}

An element $a\in\R_{0,m}$ may be written as $a=\sum_{A} a_A e_A,$ where $a_A\in\R$ and $A$ runs over all the possible ordered sets $A=\{1\le i_1<\cdots < i_k\le m\}$ or $A=\emptyset$ and $e_A:=e_{i_1}e_{i_2}\cdots e_{i_k},\quad e_0=e_\emptyset=1.$ It is clear that $\R^m$ is embedded in $\R_{0,m}$, provided we identify $\ux=(x_1,\ldots,x_m)\in\R^m$ by $\ux=x_1e_1+\dots +x_me_m;\quad x_i\in\R,i=\overline{1,m}$. A norm for $a\in \R_{0,m}$ may be defined by $\|a\|^{2}=\sum_A|a_A|^{2}.$ In particular, $\|\ux\|=|\ux|$ for $\ux\in\R^m$, where $|\cdot|$ stands for the Euclidean norm.

We shall consider Clifford algebra-valued functions defined on subsets of $\R^m$, namely $f=\sum_{A} f_A e_A$, $f_A:\R^m\to\R$. Conditions as continuity, differentiability, Lipschitz and so on, are ascribed to an $\R_{0,m}$-valued function by doing so to each of its real components $f_A$. From now on, $\Omega$ stands for a Jordan domain, i.e. a bounded oriented connected open subset of $\R^{m}$ whose boundary $\Gamma$ is a compact topological surface. For simplicity, we assume $\Gamma$ to be sufficiently smooth, e.g. Lyapunov surface. In addition, we denote the interior of $\Omega$ by $\Omega_+$ and the exterior by $\Omega_-$.

\begin{definition}
An $\R_{0,m}$-valued function $f$ in $\bC(\Omega)$ is called left (resp. right) monogenic if $\cD_{\ux}\, f = 0$ (resp. $f\, \cD_{\ux} = 0$) in $\Omega$, where $\cD_{\ux}$ is the Dirac operator
\[
	\cD_{\ux}={\partial_{x_1}}e_1+{\partial_{x_2}}e_2+\cdots +{\partial_{x_m}}e_m.
\]
\end{definition}

The so-called Clifford-Cauchy kernel 
\[
E_0 (\ux)=-\frac{1}{\sigma_{m}}\frac{\ux}{|\ux|^{m}}\quad(\ux\neq0),
\] 
is a two-sided monogenic function. Here $\sigma_{m}$ stands for the surface area of the unit sphere in $\R^{m}$.

\begin{definition}
An $\R_{0,m}$-valued function $f$ in $\bC^k(\Omega)$ is called (left) polymonogenic of order $k$ or simply $k$-monogenic if $\cD_{\ux}^k\, f = 0$ in $\Omega$, where $\cD_{\ux}^k$ is the iterated Dirac operator.
\end{definition}

Every polymonogenic function can be represented in terms of its values and those of its successive derivatives on the boundary \cite[Thm 7]{Ry}. Namely, if $f$ is $k$-monogenic in $\Omega$, then
\begin{equation}\label{repr_dom_+}
f(\ux)=\sum_{u=0}^{k-1}\int_\Gamma (-1)^uE_u(\uy-\ux)n(\uy) \cD_{\uy}^uf(\uy)d\uy, x\in\Omega,
\end{equation}
where the $E_u$'s satisfy $\partial_{\ux} E_{u+1}(\ux)=E_{u}(\ux)$ for all $\ux\not=0$ and $u=0,\ldots, k-1$.

\section{Cauchy-type and singular integral operators}\label{Sec3}

Formula \eqref{repr_dom_+} and Whitney extension theorem (component-wise applied to $\R_{0,m}$-valued functions) can be combined to give the following operators, which have been shown to be well-defined \cite{DAB1}: 
\begin{definition}\label{def_op} 
The Cauchy transform related to $f\in\Li(k+\alpha,\Gamma)$ and its singular version are given respectively by
\begin{align}\label{def_CT_k}	
		\cC_k^{(0)}f=[\cC_kf]^{(0)}(\ux)=\sum\limits_{s=0}^{k}\int_\Gamma (-1)^sE_s(\uy-\ux)n(\uy)\cD_{\uy}^s\f(\uy)d\uy,\quad\ux\in \Omega\setminus\Gamma,\\
		\label{def_Sk_0}
		\cS_k^{(0)}f=[\cS_kf]^{(0)}(\uz)=2\sum\limits_{s=0}^{k}\int_\Gamma (-1)^sE_s(\uy-\uz)n(\uy)\cD_{\uy}^s\f(\uy)d\uy,\quad\uz\in\Gamma,
\end{align}
where $\f$ denotes the $\R_{0,m}$-valued Whitney extension of $f\in\Li(k+\alpha,\Gamma)$. 
\end{definition}

We see at once that our definition coincides with the classical one related to a H\"older function $f\in\bC^{\alpha}(\Gamma)$. Namely, 
\begin{align}
	\cC_0 f(\ux)&=\int_\Gamma E_0(\uy-\ux)n(\uy)f(\uy)d\uy,\quad\ux\in\R^m\setminus\Gamma, \label{def_CT_0}\\
	\cS_0 f(\uz)&=2\int_\Gamma E_0(\uy-\uz)n(\uy)f(\uy)d\uy,\quad\uz\in\Gamma. \label{def_S_0}
\end{align}

The Cauchy transform \eqref{def_CT_0} represents a monogenic function in $\R^m\setminus\Gamma$ with continuous limiting values as approaching the contour $\Gamma$. Actually, the Plemelj-Sokhotski formula \cite{Ift} gives an even more precise information:
\begin{equation}\label{cliff_Plemelj-Sokhotski_C0}
	[\cC_0f]^+=\frac{1}{2}[I+\cS_0]f\quad \mbox{and }\quad [\cC_0f]^-=\frac{1}{2}[-I+\cS_0]f,
\end{equation}
where $I$ is the identity operator and $\cC_0^\pm f(\uz)=\lim\limits_{\substack{\ux\to \uz\\ \ux\in\Omega_\pm}}\cC_0f(\ux).$

When looked at from a general point of view, the preceding fact suggests that identical conclusions hold with respect to $\cC_k^{(0)}$ and $\cS_k^{(0)}$. In fact, as it was pointed out in \cite{DAB1}, the function $\cC_k^{(0)}$ is polymonogenic of order $k+1$ and 
\[
\begin{cases}
	[\cC_k^{(0)}f]^+(\uz)-[\cC_k^{(0)}f]^-(\uz)=\f\mid_\Gamma=f^{(0)}(\uz),&\quad\uz\in\Gamma,\\
	[\cC_k^{(0)}f]^+(\uz)+[\cC_k^{(0)}f]^-(\uz)=\cS_k^{(0)}f(\uz),&\quad\uz\in\Gamma, 
\end{cases}
\]
or, equivalently,
\begin{equation}\label{cliff_Plemelj-Sokhotski_Ck}
	[\cC_k^{(0)}f]^+=\frac{1}{2}[I^{(0)}+\cS_k^{(0)}]f\quad \mbox{and }\quad [\cC_k^{(0)}f]^-=\frac{1}{2}[-I^{(0)}+\cS_k^{(0)}]f.
\end{equation}
Here $I^{(0)}f=f^{(0)}$ and, as usual, $[\cC_k^{(0)}f]^\pm(\uz)=\lim\limits_{\substack{\ux\to \uz\\ \ux\in\Omega_\pm}}\cC_k^{(0)}f(\ux).$

A close inspection of Definition \ref{def_op} makes it natural to try to relate the Lipschitz data $f$ to the iterated Dirac operator of the Whitney extension $\f$. For this purpose, we first show a formula for the iterated Dirac operator in terms of multi-indices.
\begin{lemma}\label{D_multii}
Let $e_{(l)}=e_i$ provided $|l|=1$ and $l_i=1$. Then
\begin{equation}\label{eq_Ds}
	\cD_{\ux}^s= c_s
\begin{cases}
			\sum\limits_{\substack{|j|=s\\ j_i-even}} \partial^{(j)}_{\ux}, & \quad s\quad\emph{even},\\
			\sum\limits_{\substack{|j+l|=s\\j_i-even\\|l|=1}}e_{(l)}\partial^{(j+l)}_{\ux}, &\quad s\quad\emph{odd},
\end{cases}	
\end{equation}
where $c_s=1$ if $s\equiv 0,1 \mod 4$ and $c_s=-1$ if $s\equiv 2,3 \mod 4$.
\end{lemma}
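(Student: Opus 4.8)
The plan is to prove the identity \eqref{eq_Ds} by induction on $s$, using the basic fact that $\cD_{\ux}^{s+1}=\cD_{\ux}\,\cD_{\ux}^{s}$, and carefully tracking how applying one more copy of $\cD_{\ux}=\sum_i e_i\partial_{x_i}$ toggles the parity of $s$, introduces the single-slot multi-index $l$ (or absorbs it), and flips the sign constant $c_s$. The base cases $s=0$ (the empty product, giving the identity, with $c_0=1$) and $s=1$ (which is literally $\cD_{\ux}=\sum_{|l|=1}e_{(l)}\partial^{(l)}_{\ux}$, with $c_1=1$) are immediate from the definitions. The periodicity of $c_s$ with period $4$ will come out of the relation $e_i^2=-1$: each time two Dirac operators are composed, the diagonal terms contribute a factor $-1$, which matches the statement that $c_{s+2}=-c_s$.

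There are two inductive steps, according to the parity of $s$. First I would treat the step from even $s$ to odd $s+1$: starting from $\cD_{\ux}^{s}=c_s\sum_{|j|=s,\,j_i\text{ even}}\partial^{(j)}_{\ux}$, apply $\cD_{\ux}=\sum_i e_i\partial_{x_i}$ on the left. Each term $e_i\partial_{x_i}\partial^{(j)}_{\ux}=e_i\partial^{(j+e_{(i)})}_{\ux}$ raises the $i$-th exponent of $j$ from an even number to an odd one, so writing $l=e_{(i)}$ we land exactly on sums of the form $e_{(l)}\partial^{(j+l)}_{\ux}$ with $|j+l|=s+1$, $j_i$ even, $|l|=1$; since no cancellation or doubling occurs here, the constant is unchanged, consistent with $c_{s+1}=c_s$ for even $s$. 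The harder of the two steps is the passage from odd $s$ to even $s+1$: here $\cD_{\ux}^{s}=c_s\sum_{|j+l|=s,\,j_i\text{ even},\,|l|=1}e_{(l)}\partial^{(j+l)}_{\ux}$, and applying $\cD_{\ux}$ produces $\sum_i\sum_{l} e_ie_{(l)}\partial^{(j+l+e_{(i)})}_{\ux}$. This is where the main obstacle lies: the products $e_ie_{(l)}$ must be resorted using $e_ie_{i}=-1$ and $e_ie_{s}=-e_se_i$, and the off-diagonal ($i\neq l$) contributions have to be shown to cancel in pairs, leaving only the diagonal terms $e_{(l)}^2\partial^{(j+2l)}_{\ux}=-\partial^{(j+2l)}_{\ux}$, which regroup into $-\sum_{|j'|=s+1,\,j'_i\text{ even}}\partial^{(j')}_{\ux}$ after renaming $j'=j+2l$ (a multi-index all of whose entries are even); the extra factor $-1$ gives $c_{s+1}=-c_s$ for odd $s$, matching the $\bmod 4$ rule.

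For the cancellation of the off-diagonal part I would pair, for fixed total multi-index $j+l+e_{(i)}$ with $i\neq l$, the term coming from slot $i$ carrying the basis vector and slot $l$ being raised, against the term coming from slot $l$ carrying the basis vector and slot $i$ being raised; these two contributions carry coefficients $e_ie_{(l)}$ and $e_{(l)}e_i$ respectively, which are negatives of one another, while the partial-derivative factors are identical, so they annihilate. One must also check that the parity bookkeeping is consistent, i.e. that exactly one of the two slots being incremented restores "all entries even" — this is where the constraint that in the odd case $l$ picks out the unique odd slot is used. I expect the only real care needed is a clean indexing scheme for these pairs so that every off-diagonal term is counted once and cancelled once; once that is set up, the computation is routine and the $\bmod 4$ pattern of $c_s$ drops out automatically. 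Finally, one records that the formula holds for all $s\ge 0$, completing the induction.
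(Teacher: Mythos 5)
Your proposal is correct and follows essentially the same route as the paper's own proof: induction on $s$ with the two parity cases treated separately, the even-to-odd step being immediate, and the odd-to-even step handled by cancelling the off-diagonal products $e_{(q)}e_{(l)}$ via anticommutativity and collecting the diagonal terms with $e_{(l)}^2=-1$ to produce the sign flip $c_{s+1}=-c_s$. No substantive difference from the paper's argument.
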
	
\begin{proof} The proof is by induction on $s$. For $s=1$, $\cD_{\ux}=\sum\limits_{i=1}^{m}e_i\partial_{x_i}=\sum\limits_{|l|=1}e_{(l)}\partial_{\ux}^{(l)}$.
Setting $j=0$, we can easily check the formula. 
Assuming \eqref{eq_Ds} to hold for $s$, we will prove it for $s+1$. If $s+1$ is odd, $s$ is even and then, by the induction hypothesis, $\cD_{\ux}^s= c_s\sum\limits_{\substack{|j|=s\\ j_i-even} }\partial^{(j)}_{\ux}$ being $s=4p$ or $4p+2$. After applying the Dirac operator one more time, we get
\[
	\cD_{\ux}(\cD_{\ux}^s)=\cD_{\ux}^{s+1}= \sum\limits_{|l|=1}e_{(l)} \partial^{(l)}_{\ux}\Big(c_s\sum\limits_{\substack{|j|=s\\ j_i-even}} \partial^{(j)}_{\ux}\Big)=c_s\sum\limits_{\substack{|j+l|=s+1\\j_i-even\\|l|=1}}e_{(l)}\partial^{(j+l)}_{\ux}.
\]
Since $c_s=c_{s+1}$ when $s$ is even, \eqref{eq_Ds} holds for $s+1$ odd. 
We now suppose that $s+1$ is even, so that $s=4p+1$ or $4p+3$ and
\begin{align*}
		\cD_{\ux}^{s+1}&= \sum\limits_{|q|=1}e_{(q)} \partial^{(q)}_{\ux}\Big(c_s\sum\limits_{\substack{|j+l|=s\\j_i-even\\|l|=1}}e_{(l)} \partial^{(j+l)}_{\ux}\Big)=c_s\sum\limits_{\substack{|j+l+q|=s+1\\j_i-even\\|l|=1, |q|=1}}e_{(q)}e_{(l)} \partial^{(j+l+q)}_{\ux}\\
		&=-c_s\sum\limits_{\substack{|j+l+l|=s+1\\j_i-even\\|l|=1}} \partial^{(j+l+l)}_{\ux},
\end{align*}
the last equality being a consequence of the fact that $e_{(q)}e_{(l)}+e_{(l)}e_{(q)}=0$ for every $l\neq q$, $|l|=|q|=1$ and $e_{(l)}^2=-1.$ Since $p=j+l+l$ has only even entries and  $c_s=-c_{s+1}$ when $s$ is odd, \eqref{eq_Ds} holds for $s+1$ even.
\end{proof}
Lemma \ref{D_multii} provides the following reformulation of the singular operator \eqref{def_Sk_0} in terms of the Lipschitz data:
\begin{align}\label{redef_Sk}
	\cS_k^{(0)}f(\uz)&=\sum\limits_{\substack{s-even\\ s\leq k}}\int_\Gamma E_s(\uy-\uz)n(\uy)\sum\limits_{\substack{|j|=s\\ j_i-even}} f^{(j)}(\uy)d\uy \notag\\
	&-\sum\limits_{\substack{s-odd\\ s\leq k}}\int_\Gamma E_s(\uy-\uz)n(\uy)\sum\limits_{\substack{|j+l|=s\\j_i-even\\|l|=1}}e_{(l)} f^{(j+l)}(\uy)d\uy, \quad\uz\in \Gamma.
\end{align}
The corresponding Cauchy transform can be rewritten in the same manner, but for our purpose here, the focus will be on the singular operator. At first glance, \eqref{redef_Sk} does not even appear to be an injection as it is defined only by some elements of the Lipschitz data $f:=\{f^{(j)},\quad|j|\le k\}$. However, even though a Lipschitz data is not determined by none of its proper sub-collections, our following generalization of Theorem 3 in \cite{AA} states that, in the case of a compact set without isolated points, the Lipschitz data $f$ is determined by a special combination of some of its elements. 
\begin{theorem}\label{thm_fj}
Let $f\in\Li(k+\alpha,\Gamma)$ such that $f^{(0)}\equiv 0$ and for $1\leq s\leq k$ it satisfies 
\begin{align}
		\sum\limits_{\substack{|j|=s\\ j_i-even}} f^{(j)}=0, \quad&s \,even,\label{hip1}\\
		\sum\limits_{\substack{|j+l|=s\\j_i-even\\|l|=1}}e_{(l)} f^{(j+l)}=0,\quad &s \,odd.\label{hip2}
\end{align}
Then $f\equiv 0$, i.e. $f^{(j)}=0$ for all $|j|\leq k.$ 
\end{theorem}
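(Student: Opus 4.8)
The plan is to prove the statement by induction on the order $k$, the engine being a combination of Lemma \ref{D_multii}, the first--order Taylor information encoded in \eqref{L1}--\eqref{L2}, and the smoothness of $\Gamma$. It is convenient first to reformulate the hypotheses. Let $\f$ be a Whitney extension of $f$ (Theorem \ref{thm_whit}), so that $\partial^{j}\f\mid_\Gamma=f^{(j)}$ for $|j|\le k$. Then Lemma \ref{D_multii} shows that conditions \eqref{hip1}--\eqref{hip2} together with $f^{(0)}\equiv0$ say exactly that $\cD_{\ux}^{s}\f\mid_\Gamma=0$ for every $0\le s\le k$, whereas the conclusion to be reached is $\partial^{j}\f\mid_\Gamma=0$ for every $|j|\le k$.

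The one non-formal ingredient, used repeatedly below, is the following elementary fact. Let $a_1,\dots,a_m\in\R_{0,m}$ be such that $\sum_{i}a_i(x_i-y_i)=o(|\ux-\uy|)$ as $\ux\to\uy$ along $\Gamma$. Since $\Gamma$ is smooth, the chords $(\ux-\uy)/|\ux-\uy|$ cluster at the whole unit sphere of the tangent plane $T_{\uy}\Gamma$, so, arguing componentwise in $\R_{0,m}$, the tuple $(a_1,\dots,a_m)$ must be normal to $\Gamma$ at $\uy$: $a_i=n_i(\uy)\,\mu(\uy)$ for some $\mu(\uy)\in\R_{0,m}$, where $n(\uy)=\sum_i n_i(\uy)e_i$. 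Hence $\sum_i e_i a_i=n(\uy)\mu(\uy)$, and since the unit vector $n(\uy)$ is invertible in $\R_{0,m}$ (with inverse $-n(\uy)$), the extra information $\sum_i e_i a_i=0$ forces $\mu(\uy)=0$, i.e. all $a_i=0$.

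For the base case $k=1$ only $f^{(0)}\equiv0$ and $\sum_{|l|=1}e_{(l)}f^{(l)}=0$ are available. Writing \eqref{L1} with $j=0$ and using $f^{(0)}\equiv0$ gives $\sum_{|l|=1}f^{(l)}(\uy)(\ux-\uy)^{l}=-R_0(\ux,\uy)=O(|\ux-\uy|^{1+\alpha})=o(|\ux-\uy|)$, and the fact above yields $f^{(l)}\equiv0$ for $|l|=1$. For the inductive step, assume the theorem for $k-1$ and set $g:=\cD_{\ux}\f$. Then $g\in\Li(k-1+\alpha,\R^m)$, so $g\mid_\Gamma\in\Li(k-1+\alpha,\Gamma)$ with data $g^{(p)}=\partial^{p}g\mid_\Gamma=\sum_{|l|=1}e_{(l)}f^{(p+l)}$, $|p|\le k-1$. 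Since $\cD^{s}g=\cD^{s+1}\f$, Lemma \ref{D_multii} turns \eqref{hip1}--\eqref{hip2} for $f$ at orders $1,\dots,k$ into precisely the hypotheses of the theorem for $g$ at orders $0,\dots,k-1$, the order-zero condition $g^{(0)}=\cD\f\mid_\Gamma=0$ being \eqref{hip2} at $s=1$. The induction hypothesis now gives $g^{(p)}\equiv0$, that is,
\begin{equation}\label{star_plan}
\sum_{|l|=1}e_{(l)}f^{(p+l)}\equiv0,\qquad |p|\le k-1 .
\end{equation}

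It remains to deduce $f^{(j)}\equiv0$ from \eqref{star_plan} by an inner induction on $|j|$. We know $f^{(0)}\equiv0$; assume $f^{(j)}\equiv0$ for all $|j|\le r$ with $r\le k-1$. Fix $|q|=r$ and write \eqref{L1} for $f^{(q)}\equiv0$: the $l=0$ term vanishes, the $|l|\ge2$ terms are $O(|\ux-\uy|^{2})$, and the remainder is $O(|\ux-\uy|^{k+\alpha-r})$, both $o(|\ux-\uy|)$ since $r\le k-1$; hence $\sum_{|l|=1}f^{(q+l)}(\uy)(\ux-\uy)^{l}=o(|\ux-\uy|)$. The fact above gives $f^{(q+l)}(\uy)=n_i(\uy)\mu_q(\uy)$ (with $i$ the index for which $l_i=1$), and \eqref{star_plan} with $p=q$ together with the invertibility of $n(\uy)$ forces $\mu_q\equiv0$, so $f^{(q+l)}\equiv0$ for all $|l|=1$. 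As every multi-index of length $r+1$ is of the form $q+l$ with $|q|=r$, $|l|=1$, this gives $f^{(j)}\equiv0$ for $|j|=r+1$ and closes both inductions. The main obstacle is precisely the ``$o(|\ux-\uy|)\Rightarrow$ normal'' fact: that is where the smoothness of $\Gamma$ is genuinely needed and where one must take care to run the estimate componentwise before multiplying by the invertible unit normal $n(\uy)$; by contrast, checking that $g=\cD\f$ inherits the shifted hypotheses is a mechanical use of Lemma \ref{D_multii}, and the two inductions then close with no further work.
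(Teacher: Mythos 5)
Your proof is correct, and its first half coincides in substance with the paper's: your collection $g^{(p)}=\sum_{|l|=1}e_{(l)}f^{(p+l)}$ is exactly the collection $F^{(l)}$ that the paper feeds into the outer induction, and your observation that $\cD_{\ux}^{s}g=\cD_{\ux}^{s+1}\f$ combined with Lemma \ref{D_multii} packages the paper's direct multi-index verifications of \eqref{hip1}--\eqref{hip2} for that collection more transparently. Where you genuinely diverge is in recovering the individual components once $g\equiv0$ is known. The paper first deduces $f^{(j)}=0$ for $|j|=1$ by appealing to the argument of \cite[Thm 3]{AA} and then applies the outer induction hypothesis a second time to the shifted collections $G_i^{(l)}=f^{(0,\ldots,1,\ldots,0)+l}$; you instead run an explicit inner induction on $|j|$, at each level pairing the Taylor-remainder estimate $\sum_{|l|=1}f^{(q+l)}(\uy)(\ux-\uy)^{l}=o(|\ux-\uy|)$ with the identity $\sum_{|l|=1}e_{(l)}f^{(q+l)}=0$ and the invertibility of the unit normal, $n(\uy)^{-1}=-n(\uy)$, in $\R_{0,m}$. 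This buys self-containedness --- your base case $k=1$ is proved rather than cited, and the single geometric ingredient (chord directions along $\Gamma$ filling the unit sphere of the tangent plane, which is where the smoothness of $\Gamma$ enters) is isolated and reused uniformly at every order --- at the cost of invoking that hypersurface geometry repeatedly, whereas the paper's route confines it to the order-one step and otherwise stays algebraic. The two points one must keep straight in your version, and which you do handle correctly, are that the normality argument has to be run componentwise in $\R_{0,m}$ before multiplying by the inverse of $n(\uy)$, and that the inner induction only ever uses your vanishing identity for $|p|\le k-1$ together with the remainder exponent $k+\alpha-r\ge 1+\alpha$, both of which are available because $r\le k-1$.
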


\begin{proof}
The proof is by induction on $k$. The case $k=1$ has already been proved in \cite[Thm 3]{AA}. Assume the theorem holds for $k$. Let $f\in\Li(k+1+\alpha,\Gamma)$ such that for every $1\leq s\leq k+1$ it satisfies \eqref{hip1} and \eqref{hip2}.
	
Define $F^{(0)}=\sum\limits_{|j|=1}e_{(j)} f^{(j)} $ and $F^{(l)}=\sum\limits_{|j|=1}e_{(j)} f^{(j+l)}$. Then $\{F^{(l)},|l|\leq k\}\in\Li(k+\alpha,\Gamma)$ and $F^{(0)}=0$ by \eqref{hip2}. For $2\leq s\leq k$ even, we get
\[
	\sum\limits_{\substack{|l|=s\\ l_i-even}} F^{(l)}=\sum\limits_{\substack{|l|=s\\ l_i-even}} \Big(\sum\limits_{|j|=1}e_{(j)} f^{(j+l)}\Big)=\sum\limits_{\substack{|j+l|=s+1\\l_i-even\\|j|=1}} e_{(j)}f^{(j+l)} =0\quad \text{by}\quad\eqref{hip2}.
\]
On the other hand, if $1\leq s\leq k$ is odd, we apply again the multiplication rules for every $l\neq q$, $|l|=|q|=1:$ $e_{(q)}e_{(l)}+e_{(l)}e_{(q)}=0$ and $e_{(l)}^2=-1$ wich together \eqref{hip1} yield
\[
	\sum\limits_{\substack{|q+l|=s\\q_i-even\\|l|=1}}e_{(l)} F^{(q+l)}=\sum\limits_{\substack{|q+l|=s\\q_i-even\\|l|=1}}e_{(l)}\Big(\sum\limits_{|j|=1}e_{(j)} f^{(j+q+l)}\Big)=-\sum\limits_{\substack{|j^*|=s+1\\j^*_i-even}} f^{(j^*)}=0.
\]
Thus the induction hypothesis implies that $F^{(l)}\equiv 0$ for all $|l|\leq k.$ Set now $G_1^{(0)}=f^{(1,0,\ldots,0)}$ and $G_1^{(l)}=f^{(1,0,\ldots,0)+l}$. It is obvious that $G_1=\{G_1^{(l)}, |l|\leq k\}\in\Li(k+\alpha,\Gamma)$. We can now proceed analogously to the proof of \cite[Thm 3]{AA} to conclude that $f^{(j)}=0$ whenever $|j|=1$ and so $G_1^{(0)}=0$. Let us verify the other induction assumptions. Start by considering $2\leq s\leq k$ even. If this is so, 
\begin{align*}
		\sum\limits_{\substack{|l|=s\\ l_i-even}} G_1^{(l)}&=\sum\limits_{\substack{|l|=s\\ l_i-even}} f^{(1,0,\ldots,0)+l}\\
		&=-\sum\limits_{\substack{|q+l^*|=s-1\\q_i-even\\|l^*|=1}}e_{(l^*)}\Big(\sum\limits_{|j|=1}e_{(j)}f^{j+(1,0,\ldots,0)+q+L}\Big)\\
		&=-\sum\limits_{\substack{|q+l^*|=s-1\\q_i-even\\|l^*|=1}}e_{(l^*)}\Big(\sum\limits_{|j|=1}e_{(j)}f^{(j+p)}\Big),
\end{align*}
where $p=(1,0,\ldots,0)+q+l^*$, $|p|\leq k$. We note that $\sum\limits_{|j|=1}e_{(j)}f^{(j+p)}=F^{(p)}$ for $|p|\leq k$, so that the above expression vanishes. In a similar way, for $1\leq s\leq k$ odd we have
\begin{align*}
		\sum\limits_{\substack{|j+l|=s\\j_i-even\\|l|=1}}e_{(l)}G_1^{(j+l)}&=\sum\limits_{\substack{|j+l|=s\\j_i-even\\|l|=1}}e_{(l)}f^{(1,0,\ldots,0)+j+l}\\
		&=\sum\limits_{\substack{|j|=s-1\\ j_i-even}} \Big(\sum\limits_{|l|=1}e_{(l)}f^{(1,0,\ldots,0)+j+l}\Big)\\
		&=\sum\limits_{\substack{|j|=s-1\\ j_i-even}}\Big(\sum\limits_{|l|=1}e_{(l)}f^{(l+p)}\Big),
\end{align*}
where $p=(1,0,\ldots,0)+j$, $|p|\leq k$. Now the induction hypothesis implies that $G_1^{(l)}=0$ for all $|l|\leq k$. To complete the proof, similar considerations apply to the functions $G_i=f^{(0,\ldots,1,\ldots,0)}$, where the multi-index has all entries zero except the $i$th which is one. Consequently, $f^{(j)}=0$ for all $|j|\leq k+1.$
\end{proof}

For completeness of exposition, we conclude this section with the relevant material from \cite{DAB1}. Let us first introduce the notation $E_{u}^{(j)}(\ux)=\partial_{\ux}^{j} E_{u}(\ux),\quad\ux\not=0$. We define
\[
\cS_k^{(j)} f(\uz)=2\sum\limits_{u=0}^k\int_\Gamma (-1)^uE_u^{(j)}(\uy-\uz)n(\uy)\cD_{\uy}^u R(\uy,\uz)d\uy+f^{(j)}(\uz)
\]
to be the associated elements of the collection $\cS_k f$. Although the definition may seem artificial, it is actually very much in the spirit of the classical case. Indeed, when $k=0$ it reduces to
\[
\cS_0 f(\uz)=2\int_\Gamma E_0(\uy-\uz)n(\uy)[f(\uy)-f(\uz)]d\uy+f(\uz),
\]
which is nothing more than a reformulation of \eqref{def_S_0}. 

The Plemelj-Privalov theorem in this framework asserts that, for $0<\alpha<1$ the inclusion $\cS_k (\Li(k+\alpha,\Gamma))\subset\Li(k+\alpha,\Gamma)$ holds, and thus, makes it legitimate to ask whether the operator is an involution. Whereas for a single function $f\in\bC^\alpha(\Gamma)$ the equality $\cS_0^2=I$ makes sense, it is somewhat misleading for a Lipschitz data. The involution, to be discussed in Section \ref{sec_main}, must thus be approached by considering every element, i.e. $[\cS_k^2f]^{(j)}=f^{(j)}$ for all $|j|\leq k$.

\section{Involution property and consequences}\label{sec_main}

This section is devoted to establishing and proving our main result, but first we need a key lemma. 
\begin{lemma}\label{lemmaSj}
For $s$ even (resp. odd), the sum 
\[
	c_s\sum\limits_{\substack{|j|=s\geq 2\\ j_i-even}}[\cS_kf]^{(j)}(\uy) \Big(\mbox{resp.} c_s\sum\limits_{\substack{|j+l|=s\\ j_i-even,|l|=1}}e_{(l)}[\cS_kf]^{(j+l)}(\uy)\Big)
\]
is equal to
\begin{equation}\label{lemaSj_a}
		2\sum\limits_{u=1}^k
		\int_\Gamma (-1)^u\cD_{\uy}^sE_u(\uzz-\uy)n(\uzz)\cD_{\uzz}^u \f(\uzz)d\uzz
\end{equation}
or equivalently, to
\begin{equation}\label{lemaSj_b}
		2\sum\limits_{u=0}^{k-s}
		\int_\Gamma (-1)^uE_u(\uzz-\uy)n(\uzz)\cD_{\uzz}^{u+s} \f(\uzz)d\uzz.
\end{equation}
\end{lemma}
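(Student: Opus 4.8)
The plan is to start from the definition of the collection elements $[\cS_k f]^{(j)}$ and assemble the indicated combinations using the multi-index formula for $\cD^s$ provided by Lemma \ref{D_multii}. Recall that $[\cS_kf]^{(j)}(\uy)=2\sum_{u=0}^{k}(-1)^u\int_\Gamma E_u^{(j)}(\uzz-\uy)n(\uzz)\cD_{\uzz}^u R(\uzz,\uy)\,d\uzz+f^{(j)}(\uy)$, where $E_u^{(j)}=\partial^{j}E_u$ and $R(\uzz,\uy)$ is the relevant Taylor remainder. For $s$ even I would take the combination $c_s\sum_{|j|=s,\,j_i\text{-even}}[\cS_kf]^{(j)}(\uy)$; the non-integral terms reassemble, via Lemma \ref{D_multii}, into $\cD_{\uy}^{s}\f(\uy)$ evaluated appropriately (more precisely into the value forced by the Lipschitz/Whitney structure), while inside the integral the differential operator $c_s\sum_{|j|=s}\partial^{j}$ acting on $E_u(\uzz-\uy)$ in the variable $\uy$ produces exactly $\cD_{\uy}^{s}E_u(\uzz-\uy)$. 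The same bookkeeping with the odd-$s$ formula $c_s\sum_{|j+l|=s,\,|l|=1}e_{(l)}\partial^{(j+l)}$ handles the odd case; the Clifford multiplication rules $e_{(l)}e_{(q)}+e_{(q)}e_{(l)}=0$, $e_{(l)}^2=-1$ are what convert the nested sums into the single operator $\cD_{\uy}^s$, just as in the proof of Lemma \ref{D_multii}.

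Next I would deal with the Taylor-remainder terms $\cD_{\uzz}^u R(\uzz,\uy)$ and the $s$-th derivative hitting them. Differentiating $R(\uzz,\uy)$ in the variable $\uy$ kills the low-order Taylor coefficients: for $|j|=s$, $\partial_{\uy}^{j}R(\uzz,\uy)$ equals $-\cD_{\uzz}^u f^{(j)}$-type remainder plus polynomial corrections, and after summing against $c_s\sum_{|j|=s}$ (or the odd analogue) and using Lemma \ref{D_multii} one finds that the derivative of the remainder with respect to $\uy$ becomes a genuine remainder of the $\cD^s$-shifted data, whose non-remainder piece is precisely the term that cancels against $f^{(j)}(\uy)$ (or combines with it) to leave the clean form \eqref{lemaSj_a}, with the $u=0$ summand disappearing because $\cD_{\uy}^{s}E_0(\uzz-\uy)$ contributes only the jump/boundary term that is absorbed. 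This is the step that requires the most care: tracking exactly which polynomial-in-$(\uzz-\uy)$ pieces survive differentiation and summation, and checking that the surviving constant term matches $-f^{(j)}(\uy)$ so that the $+f^{(j)}$ in the definition cancels it. I would lean on Theorem \ref{thm_fj} only implicitly here (it guarantees the combinations are well-behaved), but the real work is the remainder calculus together with the identities $\partial_{\ux}E_{u+1}=E_u$.

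For the equivalence of \eqref{lemaSj_a} and \eqref{lemaSj_b} I would use the two defining properties of the kernels: $\partial_{\ux}E_{u+1}(\ux)=E_u(\ux)$ and the fact that $E_u(\uzz-\uy)$ as a function of $\uy$ satisfies a corresponding relation (with a sign from the argument $\uzz-\uy$). Applying Lemma \ref{D_multii} once more, $\cD_{\uy}^s$ acting on $E_u(\uzz-\uy)$ in the $\uy$-variable equals $(-1)^s E_{\max(u-s,0)}(\uzz-\uy)$ up to the cases $u<s$ where further differentiation produces the lower kernels $E_0,E_{-1},\dots$; the terms with $u<s$ vanish (the iterated Dirac operator annihilates $E_u$ for $u<s$ in the appropriate sense, or they integrate to zero), and a reindexing $u\mapsto u+s$ in the surviving range $s\le u\le k$ turns \eqref{lemaSj_a} into \eqref{lemaSj_b}, with the sign $(-1)^s$ and the parity-dependent $c_s$ reconciling the two displays. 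I expect the main obstacle to be precisely this reconciliation of signs and the disappearance of the low-order ($u<s$, and the $u=0$) terms; once the algebra of $c_s$, $(-1)^u$, and the Clifford units is organized—essentially a continuation of the computation already carried out in the proof of Lemma \ref{D_multii}—the rest is routine manipulation of the Cauchy-type kernels and their derivatives.
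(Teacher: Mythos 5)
Your opening step (reassembling $c_s\sum_{|j|=s}\partial^{(j)}$, resp.\ the odd-parity combination, into the single operator $\cD_{\uy}^s$ acting on the kernel via Lemma \ref{D_multii}) and your closing step (passing from \eqref{lemaSj_a} to \eqref{lemaSj_b} via $\cD_{\uy}^sE_u(\uzz-\uy)=(-1)^sE_{u-s}(\uzz-\uy)$ for $u\ge s$, zero for $u<s$, followed by the reindexing $u\mapsto u+s$) both match the paper. The problem is the middle of your argument. First, a misreading of the definition: in $\cS_k^{(j)}$ the multi-index derivative falls \emph{only} on the kernel, as $E_u^{(j)}$, while the remainder enters as $\cD_{\uzz}^uR(\uzz,\uy)$, differentiated in the other variable. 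There is no ``$s$-th derivative hitting the remainder'' and no ``differentiating $R(\uzz,\uy)$ in the variable $\uy$''; the calculus you sketch around that point does not correspond to the object being manipulated.

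Second, and more seriously, the cancellation producing \eqref{lemaSj_a} is not the algebraic bookkeeping you describe. The actual proof writes $\cD_{\uzz}^uR(\uzz,\uy)=\cD_{\uzz}^u\f(\uzz)-\cD_{\uy}^u\f(\uy)-P_u[f](\uzz,\uy)$, where $P_u[f]$ is an explicit Clifford-valued polynomial in $\uzz-\uy$; substituting this leaves, besides \eqref{lemaSj_a}, a residual $K(\uy)$ built from the integrals of $\cD_{\uy}^sE_u(\uzz-\uy)n(\uzz)$ against $\cD_{\uy}^u\f(\uy)$ and against $P_u[f](\uzz,\uy)$, together with the term $c_s\sum_{|j|=s,\ j_i\ \mathrm{even}}f^{(j)}(\uy)$ coming from the definition. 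Proving $K=0$ requires (i) the identities \eqref{P_a}--\eqref{P_b} and the fact $\int_\Gamma E_0(\uzz-\uy)n(\uzz)\,d\uzz=\tfrac12$ to recombine everything into $-2\sum_{u=0}^{k-s}\int_\Gamma(-1)^uE_u(\uzz-\uy)n(\uzz)\cD_{\uzz}^uP_s[f](\uzz,\uy)\,d\uzz$; (ii) the Cauchy representation formula applied to the $(k-s+1)$-monogenic function $\uzz\mapsto P_s[f](\uzz,\uy)$ in $\R^m\setminus\overline{\Omega}_*$, where $\Gamma$ is deformed around the singular point $\uy$ by a small hemisphere $C_\epsilon^*(\uy)$; and (iii) explicit $O(\epsilon^{u+1})$ estimates of the contributions on $C_\epsilon^*(\uy)$ as $\epsilon\to0$, using $|P_s[f](\uzz,\uy)|\le M|\uzz-\uy|$. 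None of (i)--(iii) appears in your proposal: the assertion that ``the surviving constant term matches $-f^{(j)}(\uy)$ so that the $+f^{(j)}$ in the definition cancels it'' substitutes a genuine principal-value/Cauchy-formula argument with wishful accounting. Finally, Theorem \ref{thm_fj} plays no role here, even implicitly; it is invoked only later, in the proof of the involution theorem.
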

\begin{proof}
Indeed, if $s$ is even 
\begin{align}\label{eq_Sj_even}
		&c_s\sum\limits_{\substack{|j|=s\geq 2\\ j_i-even}}[\cS_kf]^{(j)}(\uy)\notag\\
		&=c_s\sum\limits_{\substack{|j|=s\geq 2\\ j_i-even}}\Bigl\{2\sum\limits_{u=0}^{k}\int_\Gamma(-1)^uE_u^{(j)}(\uzz-\uy)n(\uzz)\cD_{\uzz}^uR(\uzz,\uy)d\uzz+f^{(j)}(\uy)\Bigr\}\notag\\
		&=\sum\limits_{u=1}^k
		2\int_\Gamma (-1)^u\cD_{\uy}^sE_u(\uzz-\uy)n(\uzz)\cD_{\uzz}^uR(\uzz,\uy)d\uzz
		+c_s\sum\limits_{\substack{|j|=s\geq 2\\ j_i-even}}f^{(j)}(\uy).
\end{align}
With Theorem \ref{thm_whit} in mind, we get 
\[
	\f(\uzz)=\f(\uy)+\sum\limits_{0<|l|\leq k}\frac{\partial_{\uy}^{(l)} \f(\uy)}{l!}(\uzz-\uy)^l+R(\uzz,\uy),\quad\uy, \uzz\in\Gamma.
\]
Therefore,
\begin{equation}\label{eq_Du_R}
		\cD_{\uzz}^u R(\uzz,\uy)
		=\cD_{\uzz}^u \f(\uzz)-\cD_{\uy}^u \f(\uy)-P_u[f](\uzz,\uy),\quad 0\leq u\leq k,
\end{equation}
where
\begin{align}\label{P_u}
		P_u[f](\uzz,\uy)=&\notag\\
		&c_u
\begin{cases}
			\sum\limits_{\substack{|p|=u\\ p_i-even}}\Bigl\{ \sum\limits_{|p|<|l|\leq k}\dfrac{\partial_{\uy}^{(l)} \f(\uy)}{(l-p)!}(\uzz-\uy)^{l-p} \Bigl\}, &u\, even,\\
			\sum\limits_{\substack{|p+q|=u\\p_i-even,|q|=1}}e_{(q)}\Bigl\{\sum\limits_{|p+q|<|l|\leq k}\dfrac{\partial_{\uy}^{(l)} \f(\uy)}{(l-(p+q))!}(\uzz-\uy)^{l-(p+q)}\Bigl\}, &u\, odd.
\end{cases}	
\end{align}
Let us note that, by definition, $P_k[f](\uzz,\uy)=0$. In this way, \eqref{eq_Du_R} becomes 
\[
	\cD_{\uzz}^k R(\uzz,\uy)=\cD_{\uzz}^k \f(\uzz)-\cD_{\uy}^k \f(\uy), u=k.
\] 
On the other hand, it is a matter of straightforward computation to show that
\begin{equation}
		\cD_{\uzz}^uP_s[f](\uzz,\uy)=P_{u+s}[f](\uzz,\uy)+\cD_{\uy}^{u+s} \f(\uy),\quad\forall u=1,\ldots,k-s \label{P_a}.
\end{equation}
Note that, for $u=k-s$, \eqref{P_a} becomes
\begin{equation}
		\cD_{\uzz}^{k-s}P_s[f](\uzz,\uy)=\cD_{\uy}^{k} \f(\uy).\label{P_b}
\end{equation}
Now, substituting \eqref{eq_Du_R} into \eqref{eq_Sj_even} gives
\begin{align}\label{eq_cs}
		c_s\sum\limits_{\substack{|j|=s\geq 2\\ j_i-even}}	[\cS_kf]^{(j)}(\uy)
		&=2\sum\limits_{u=1}^k
		\int_\Gamma (-1)^u\cD_{\uy}^sE_u(\uzz-\uy)n(\uzz)\cD_{\uzz}^u \f(\uzz)d\uzz\notag\\
		&-2\sum\limits_{u=1}^k
		\int_\Gamma (-1)^u\cD_{\uy}^sE_u(\uzz-\uy)n(\uzz)\cD_{\uy}^u \f(\uy)d\uzz\notag\\
		&-2\sum\limits_{u=1}^{k-1}
		\int_\Gamma (-1)^u\cD_{\uy}^sE_u(\uzz-\uy)n(\uzz)P_u[f](\uzz,\uy)d\uzz\notag\\
		&+c_s\sum\limits_{\substack{|j|=s\geq 2\\ j_i-even}}f^{(j)}(\uy).
\end{align}
Consequently, it suffices to show that the sum, say $``K"$, of the last three terms in \eqref{eq_cs} vanishes. 
	
By the kernel's rules 
\begin{align*}
		K(\uy)&=-2\int_\Gamma E_0(\uzz-\uy)n(\uzz)\cD_{\uy}^{s} \f(\uy)d\uzz\\
		&-2\sum\limits_{u=1}^{k-s}\int_\Gamma (-1)^{u} E_{u}(\uzz-\uy)n(\uzz)\Big[\cD_{\uy}^{u+s}\f(\uy)+P_{u+s}[f](\uzz,\uy)\Big]d\uzz\notag\\
		&- 2\int_\Gamma  E_0(\uzz-\uy)n(\uzz)P_{s}[f](\uzz,\uy)d\uzz\notag\\
		&+c_s\sum\limits_{\substack{|j|=s\geq 2\\ j_i-even}}f^{(j)}(\uy).
\end{align*}
By \eqref{P_a}, \eqref{P_b}, Theorem \ref{thm_whit} and  the fact that $\int_\Gamma E_0(\uzz-\uy)n(\uzz) d\uzz=\frac{1}{2}$, we get 
\[
	K(\uy)=	-2\sum\limits_{u=0}^{k-s}\int_\Gamma (-1)^{u} E_{u}(\uzz-\uy)n(\uzz)\cD_{\uzz}^{u}P_{s}[f](\uzz,\uy)d\uzz.
\]
Set $F(\uzz)=P_s[f](\uzz,\uy)$ and for $\epsilon>0$ sufficiently small let $B_\epsilon(\uy)$ be the open ball with center in $\uy\in\Gamma$ and radius $\epsilon$, $C_{\epsilon}(\uy)$ its boundary and $\Gamma_{\epsilon}:=\Gamma\setminus\Gamma\cap B_\epsilon(\uy)$. We thus get
\[
	p.v. \int_\Gamma E_u(\uzz-\uy)n(\uzz)\cD_{\uzz}^u F(\uzz)d\uzz=\lim_{\epsilon\to 0}\int_{\Gamma_\epsilon} E_u(\uzz-\uy)n(\uzz)\cD_{\uzz}^u F(\uzz)d\uzz.
\]
Denote by $\Omega_*$ the interior of $\Gamma^*:=\Gamma_{\epsilon}\cup C_{\epsilon}^*(\uy)$, where $C_{\epsilon}^*=C_{\epsilon}\cap\Omega_+$. Then  $F(\uzz)$ is $(k-s+1)$-monogenic in $\Omega_*$ and continuous on $\Gamma^*\cup\overline{\Omega}_*$ so that the Cauchy formula (applied in $\R^m\setminus\overline{\Omega}_*$) yields
\[
	\sum\limits_{u=0}^{k-s}\int_{\Gamma^*} (-1)^{u} E_{u}(\uzz-\uy)n(\uzz)\cD_{\uzz}^{u}P_{s}[f](\uzz,\uy)d\uzz=0.
\]
We are left with the task of proving 
\[
	\lim\limits_{\epsilon\to 0}
	\Bigg\{
	\sum\limits_{u=0}^{k-s}\int_{C^*_{\epsilon}(\uy)} (-1)^{u} E_{u}(\uzz-\uy)n(\uzz)\cD_{\uzz}^{u}F(\uzz)d\uzz\Bigg\}=0.
\]
By \eqref{P_u}, $|P_s[f](\uzz,\uy)|\leq M \sum\limits_{0<|j|\leq k-s}|\uzz-\uy|^{|j|}$. Since $|j|\geq 1$, it holds for $|\uzz-\uy|$ sufficiently small that $|\uzz-\uy|^{|j|}\leq |\uzz-\uy|$. Therefore, for $u=0$
\[
	\Big|\int_{C^*_{\epsilon}(\uy)} E_0(\uzz-\uy)n(\uzz)P_s[f](\uzz,\uy)d\uzz\Big|\leq M
	\int_{C^*_{\epsilon}(\uy)}\frac{|\uzz-\uy|}{|\uzz-\uy|^{m-1}}d\uzz \leq M\epsilon \to 0 \quad(\epsilon\to 0).
\]
In a similar way, for $u\geq 1$
\[
	\Big|\int_{C^*_{\epsilon}(\uy)}E_{u}(\uzz-\uy)n(\uzz)P_{u+s}[f](\uzz)d\uzz\Big|\leq
	M\int_{C^*_{\epsilon}(\uy)}\frac{|\uzz-\uy|}{|\uzz-\uy|^{m-u-1}}d\uzz
	\leq M	\epsilon^{u+1}
\]
and
\[
	\Big|\int_{C^*_{\epsilon}(\uy)}E_{u}(\uzz-\uy)n(\uzz)\cD_{\uy}^{u+s}\f(\uy)d\uzz\Big|\leq
	M\int_{C^*_{\epsilon}(\uy)}\frac{d\uzz}{|\uzz-\uy|^{m-u-1}}
	\leq M\epsilon^{u}
\]
tend to zero as $\epsilon$ tends to $0$.
	
Then for $1\leq u\leq k-s$ we have by \eqref{P_a} that
\begin{align*}
		&\Big|\int_{C^*_{\epsilon}(\uy)}E_{u}(\uzz-\uy)n(\uzz)\cD_{\uzz}^{u}F(\uzz)d\uzz\Big|\\
		&=\Big|\int_{C^*_{\epsilon}(\uy)}E_{u}(\uzz-\uy)n(\uzz)[P_{u+s}[f](\uzz)+\cD_{\uy}^{u+s}\f(\uy)]d\uzz\Big|\to 0 \quad (\epsilon\to 0).
\end{align*}
Hence $K=0$ as claimed. In the same manner we can see that \eqref{lemaSj_a} holds for $s$ odd. 
\end{proof}

We are now in a position to state our main result, which ensures that the singular integral operator behaves as an involution on the higher order Lipschitz class.
\begin{theorem}\label{inv}
Let $f\in\Li(k+\alpha,\Gamma)$. Then $\cS_k^2=I$, where $I$ is the identity operator. That is,  
\begin{equation}\label{id_inv}
		[\cS_k^2f]^{(j)}=f^{(j)} \quad \forall\quad |j|\leq k.
\end{equation}
\end{theorem}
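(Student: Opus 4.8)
The plan is to prove the involution property by reducing the statement $[\cS_k^2 f]^{(j)} = f^{(j)}$ for all $|j|\le k$ to the injectivity result of Theorem~\ref{thm_fj}. The key observation is that $\cS_k^2 f - f$ is again an element of $\Li(k+\alpha,\Gamma)$ (by the Plemelj--Privalov theorem mentioned at the end of Section~\ref{Sec3}), so it suffices to check that this collection satisfies the hypotheses \eqref{hip1}, \eqref{hip2} of Theorem~\ref{thm_fj}, namely that its zeroth component vanishes and that the relevant even/odd multi-index combinations of its components vanish for $1\le s\le k$. Conclude then that every component of $\cS_k^2 f - f$ is zero.

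The first step is to show $[\cS_k^2 f]^{(0)} = f^{(0)}$. I would use the reformulation \eqref{redef_Sk} of $\cS_k^{(0)}$ together with Lemma~\ref{lemmaSj}: applying $\cS_k^{(0)}$ to $\cS_k f$ amounts to inserting into \eqref{redef_Sk} the even/odd multi-index combinations of the components of $\cS_k f$, and Lemma~\ref{lemmaSj} identifies precisely these combinations with the integrals \eqref{lemaSj_b}, i.e. $2\sum_{u=0}^{k-s}\int_\Gamma(-1)^u E_u(\uzz-\uy)n(\uzz)\cD_{\uzz}^{u+s}\f(\uzz)\,d\uzz$ (plus lower-order boundary terms accounted for in the lemma). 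Substituting this back and re-indexing the double sum $\sum_s\sum_u$, the kernel relations $\partial_{\ux}E_{u+1}=E_u$ collapse the expression, and the classical Plemelj--Sokhotski identity $\cS_0^2 = I$ for the leading term $E_0$ combined with the Cauchy representation formula \eqref{repr_dom_+} for the remaining terms yields $[\cS_k^2 f]^{(0)} = \f|_\Gamma = f^{(0)}$.

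The second and more laborious step is to verify the higher hypotheses: for $1\le s\le k$, the even (resp.\ odd) combination $c_s\sum_{|j|=s,\,j_i\text{ even}}[\cS_k^2 f]^{(j)}$ (resp.\ the odd analogue with $e_{(l)}$) must equal the same combination applied to $f$. Here I would again invoke Lemma~\ref{lemmaSj}, now applied to $\cS_k f$ in place of $f$: the relevant combination of $[\cS_k^2 f]^{(j)}$ equals $2\sum_{u=0}^{k-s}\int_\Gamma(-1)^u E_u(\uzz-\uy)n(\uzz)\cD_{\uzz}^{u+s}(\widetilde{\cS_k f})(\uzz)\,d\uzz$, and one needs to relate $\cD_{\uzz}^{u+s}$ of the Whitney extension of $\cS_k f$ to that of $\f$. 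Since $\cS_k f$ and $\f$ agree on $\Gamma$ in the appropriate derivative sense once the first step is known (so their Whitney extensions have matching iterated-Dirac traces), the Cauchy formula again reconstitutes $\cD_{\uy}^{s}\f(\uy)$, which by Lemma~\ref{D_multii} is exactly $c_s$ times the desired combination of $f^{(j)}$'s. Thus all hypotheses of Theorem~\ref{thm_fj} hold for $\cS_k^2 f - f$, giving \eqref{id_inv}.

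The main obstacle I anticipate is the bookkeeping in the second step: one must carefully track how the Whitney extension interacts with the singular operator — in particular that the boundary traces $\cD_{\uy}^{u}(\widetilde{\cS_k f})|_\Gamma$ coincide with $\cD_{\uy}^{u}\f|_\Gamma$ for $0\le u\le k$ — and ensure the re-indexing of the nested sums over $s$ and $u$ is consistent with the parity constraints $j_i$ even, $|l|=1$ built into Lemma~\ref{D_multii}. The principal-value and vanishing-on-small-spheres estimates needed to justify interchanging limits are already handled in the proof of Lemma~\ref{lemmaSj}, so the remaining work is algebraic rather than analytic, but the combinatorial reconciliation of the two representations \eqref{lemaSj_a}--\eqref{lemaSj_b} at each order $s$ is where care is required.
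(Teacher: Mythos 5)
Your overall strategy coincides with the paper's: prove $[\cS_k^2f]^{(0)}=f^{(0)}$ first, then verify the even/odd multi-index hypotheses of Theorem~\ref{thm_fj} for $\cS_k^2f-f$ and invoke that theorem. However, your second step contains a genuine error. You claim that ``$\cS_kf$ and $\f$ agree on $\Gamma$ in the appropriate derivative sense once the first step is known, so their Whitney extensions have matching iterated-Dirac traces.'' This is false: the first step only gives $[\cS_k^2f]^{(0)}=f^{(0)}$ and says nothing about $\cS_kf$ versus $f$; in general $\cD_{\uy}^{u}\widetilde{\cS_kf}\vert_\Gamma\neq\cD_{\uy}^{u}\f\vert_\Gamma$ (if these traces did coincide for all $u$, the data $\cS_kf-f$ would satisfy the hypotheses of Theorem~\ref{thm_fj} and $\cS_k$ itself would be the identity). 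The correct route, which the paper takes, is to apply Lemma~\ref{lemmaSj} a \emph{second} time, to the data $\cS_kf$, converting $\cD_{\uy}^{u+s}\widetilde{\cS_kf}\vert_\Gamma$ into another layer of singular integrals against $\cD_{\uzz}^{v+u+s}\f$. The classical involution $\cS_0^2=I$ applied to the H\"older function $\cD_{\uy}^{s}\f\vert_\Gamma$ then produces the desired term $c_s\sum_{|j|=s}f^{(j)}$, and one must separately prove that the remaining nested double integral (the quantity $J$ in the paper) vanishes.

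This points to the second, related gap: both in step one (the quantity $Q$) and in step two (the quantity $J$), the vanishing of the cross terms is \emph{not} covered by the estimates already done in Lemma~\ref{lemmaSj}, nor can the Cauchy representation formula \eqref{repr_dom_+} be applied directly, because the inner kernel $E_u(\uzz-\uy)$ is singular at the boundary point $\uy=\uzz\in\Gamma$. One must excise small balls around \emph{two} distinct boundary points $\uz$ and $\uzz$, apply the Cauchy formula in the exterior of the modified domain, and then estimate the contributions of both spherical caps; this forces a symmetrization of the double sum over $s$ and $u$, a change of variables pairing the integrals over $C^*_\epsilon(\uz)$ and $C^*_\epsilon(\uzz)$, and a case analysis on the parities of $s$ and $u$, including the delicate case $s$ even, $u=0$, where the limits are genuine one-sided Cauchy transforms rather than terms tending to zero. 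This is the analytic core of the proof and cannot be dismissed as ``algebraic bookkeeping.'' Your reduction to Theorem~\ref{thm_fj} and your use of Lemma~\ref{lemmaSj} for the first application are sound, but as written the argument does not close.
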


\begin{proof} Before going to the proof, we briefly outline our strategy. We first show \eqref{id_inv} for $j=0$ and then we take advantage of Theorem \ref{thm_fj} to show \eqref{id_inv} for all $|j|\leq k.$
	
Let $j=0$, then	
\begin{align*}
		[\cS_k^2f]^{(0)}(\uz)
		&=2\int_\Gamma E_0(\uy-\uz)n(\uy)\widetilde{\cS_kf}(\uy)d\uy\\
		&+2\sum\limits_{2\leq s-even}\int_\Gamma E_s(\uy-\uz)n(\uy)c_s\sum\limits_{\substack{|j|=s\\ j_i-even}} \partial^{(j)}_{\uy}\widetilde{\cS_kf}(\uy) d\uy+\\
		&+2\sum\limits_{s-odd}\int_\Gamma -E_s(\uy-\uz)n(\uy)c_s\sum\limits_{\substack{|j+l|=s\\j_i-even\\|l|=1}}e_{(l)}\partial^{(j+l)}_{\uy}\widetilde{\cS_kf}(\uy)d\uy.
\end{align*}	
Combining the fact that $\partial_{\uy}^{(j)}\widetilde{\cS_kf}\mid_{\Gamma}=[\cS_kf]^{(j)}$ with Lemma \ref{lemmaSj} we get
\begin{align*}
		[\cS_k^2f]^{(0)}(\uz)
		=2\int_\Gamma E_0(\uy-\uz)n(\uy)\Big[2\sum\limits_{u=0}^{k}\int_\Gamma (-1)^uE_u(\uzz-\uy)n(\uzz)\cD_{\uzz}^u\f(\uzz)d\uzz\Big]d\uy\\
		+2\sum\limits_{s=1}^k \int_\Gamma (-1)^s E_s(\uy-\uz)n(\uy) \Bigg\{2\sum\limits_{u=1}^k
		\int_\Gamma (-1)^u\cD_{\uy}^sE_u(\uzz-\uy)n(\uzz)\cD_{\uzz}^u \f(\uzz)d\uzz\Bigg\}d\uy.
\end{align*}	
The involution property for the classical singular operator yields 
\[
	2\int_\Gamma E_0(\uy-\uz)n(\uy)\Big[2\int_\Gamma E_0(\uzz-\uy)n(\uzz)\f(\uzz)d\uzz\Big]d\uy=f^{(0)}(\uz).
\]
Therefore, by \eqref{eq_Ds} and Theorem \ref{thm_whit}, we are reduce to proving that
\[
	2\sum\limits_{s=0}^k \int_\Gamma (-1)^s E_s(\uy-\uz)n(\uy) \Big[2\sum\limits_{u=1}^{k}\int_\Gamma (-1)^u\cD_{\uy}^sE_u(\uzz-\uy)n(\uzz)\cD_{\uzz}^u \f(\uzz)d\uzz\Big] d\uy=0.
\]
By Fubini's theorem this equation can be rewritten in the form:
\begin{equation}\label{eq_def_Q}
		4\sum\limits_{s=0}^{k}(-1)^s\sum\limits_{u=1}^{k}(-1)^u\int_\Gamma\Big[ \int_\Gamma E_s(\uy-\uz)n(\uy)\cD_{\uy}^sE_u(\uzz-\uy)d\uy\Big]n(\uzz)\cD_{\uzz}^u\f(\uzz)d\uzz=0.
\end{equation}
It will thus be sufficient to show that
\begin{equation}\label{TIC}
	Q(\uz):=\sum\limits_{s=0}^{k}\int_\Gamma (-1)^sE_s(\uy-\uz)n(\uy)\cD_{\uy}^s F(\uy)d\uy=0,
\end{equation}	
where $F(\uy):=\sum\limits_{u=1}^{k}(-1)^u E_u(\uzz-\uy)$. \\
The function $Q$ can be handled in much the same way as $K$ in the proof of Lemma \ref{lemmaSj}, the main difference being in the definition of $\Gamma^*$ and, hence, the estimations. We choose $\epsilon>0$ so small that the balls $B_\epsilon(\uz)$ and $B_\epsilon(\uzz)$ are disjoint. Set $\Gamma_{\epsilon}:=\Gamma\setminus\Gamma\cap \big(B_\epsilon(\uz)\cup B_\epsilon(\uzz)\big)$ so that
\[
	p.v. \int_\Gamma E_s(\uy-\uz)n(\uy)\cD_{\uy}^s F(\uy)d\uy=\lim_{\epsilon\to 0}\int_{\Gamma_\epsilon} E_s(\uy-\uz)n(\uy)\cD_{\uy}^s F(\uy)d\uy.
\]
Denote by $\Omega_*$ the interior of $\Gamma^*:=\Gamma_{\epsilon}\cup C^*_{\epsilon}(\uz)\cup C^*_{\epsilon}(\uzz)$. The last two sets being the intersections of $\Omega$ and the boundaries of those above-mentioned balls respectively. For abbreviation, we set $C_\epsilon^*=C^*_{\epsilon}(\uz)\cup C^*_{\epsilon}(\uzz)$. Then  $F(\uy)$ is $(k+1)$-monogenic in $\Omega_*$ and continuous on $\Gamma^*\cup\overline{\Omega}_*$. By the Cauchy formula (in $\R^m\setminus\overline{\Omega}_*$) it will be sufficient to prove that
\begin{align}\label{lim}
		\lim\limits_{\epsilon\to 0}
		\Bigg\{\sum\limits_{s=0}^{k}\int_{ C^*_{\epsilon}} (-1)^sE_s(\uy-\uz)n(\uy)\cD_{\uy}^s F(\uy)d\uy\Bigg\}=0.
\end{align}
Since $\cD_{\uy}^sF(\uy)=\sum\limits_{u=s}^{k}(-1)^{u+s} E_{u-s}(\uzz-\uy)=\sum\limits_{u=0}^{k-s}(-1)^{u} E_{u}(\uzz-\uy)$ for $s\geq 1$, the sum of integrals in \eqref{lim} becomes
\begin{align}\label{suma}
		&\sum\limits_{s=0}^{k}\int_{ C^*_{\epsilon}(\uz)\cup C^*_{\epsilon}(\uzz)} (-1)^sE_s(\uy-\uz)n(\uy)\cD_{\uy}^s F(\uy)d\uy\notag\\
		&=\sum\limits_{s=1}^{k}(-1)^s\int_{C^*_{\epsilon}} E_0(\uy-\uz)n(\uy)E_s(\uzz-\uy)d\uy\notag\\
		&+\sum\limits_{s=1}^{k}(-1)^s\int_{C^*_{\epsilon}} E_s(\uy-\uz)n(\uy)E_0(\uzz-\uy)d\uy \notag\\
		&+\sum\limits_{s=1}^{k-1}(-1)^s\int_{C^*_{\epsilon}} E_s(\uy-\uz)n(\uy)\sum\limits_{u=1}^{k-s}(-1)^uE_u(\uzz-\uy)d\uy.
\end{align}	
First, we note that the last term of \eqref{suma} splits into the pair of sums
\begin{align}\label{doblesuma}
		\sum\limits_{s=1}^{k-1}\sum\limits_{u=1}^{k-s}\int a_{su}=\sum\limits_{u=1}^{\lfloor \frac{k}{2} \rfloor}\int a_{uu}+\sum\limits_{\substack{s,u=1\\ s\neq u}}^{k-1}\int(a_{su}+a_{us}), k>1.
\end{align}
We divide the second sum in \eqref{doblesuma} into three cases depending on the parity: when both $s$ and $u$ are even ($s,u\geq 2$), when both $s$ and $u$ are odd and, finally, when $u$ is even ($u\geq 2$) and $s$ is odd. We only show the first one since similar arguments apply to the others. 
\begin{align}\label{Caso_A}
		\int_{C^*_{\epsilon}}\Big[E_s(\uy-\uz)n(\uy)E_u(\uzz-\uy)+E_u(\uy-\uz)n(\uy)E_s(\uzz-\uy)\Big]d\uy\notag\\
		=M\left\{\frac{1}{\epsilon^{m-s-1}}\int_{C^*_{\epsilon}(\uz)}\frac{\uy-\uzz}{|\uzz-\uy|^{m-u}}d\uy+\frac{1}{\epsilon^{m-u-1}}\int_{C^*_{\epsilon}(\uzz)}\frac{\uy-\uz}{|\uy-\uz|^{m-s}}d\uy\right.\notag\\
		+\left.\frac{1}{\epsilon^{m-u-1}}\int_{C^*_{\epsilon}(\uz)}\frac{\uy-\uzz}{|\uzz-\uy|^{m-s}}d\uy+\frac{1}{\epsilon^{m-s-1}}\int_{C^*_{\epsilon}(\uzz)}\frac{\uy-\uz}{|\uy-\uz|^{m-u}}d\uy\right\},
\end{align}	
where $M$ is a constant dependent on $s, u$ and $m$ throughout \cite{Ry}. After the change of variable $\uy= -\underline{t}+\uz+\uzz$, we get that the first and fourth integral in \eqref{Caso_A} are equal, and thus we are reduced to proving that its limit is zero as $\epsilon\to0$. This observation applies to the second and third integral as well. It is clear that $h(\uy):=\frac{\uy-\uzz}{|\uzz-\uy|^{m-u}}$ is continuous on $C^*_{\epsilon}(\uz)$ and thus bounded, which yields
\[
	\Big|\frac{1}{\epsilon^{m-s-1}}\int_{C^*_{\epsilon}(\uz)}\frac{\uy-\uzz}{|\uzz-\uy|^{m-u}}d\uy\Big|\leq \frac{N}{\epsilon^{m-s-1}}\int_{C^*_{\epsilon}(\uz)}d\uy=N'\epsilon^s\to 0 \quad (\epsilon\to 0).
\]
Hence the second sum in \eqref{doblesuma} tends to zero as $\epsilon\to 0$, and so does the first one. 
	
Finally, we note that the estimations of those integrals in the first two sums of \eqref{suma} whenever $s$ is odd and $u=0$ hold from the third case above-mentioned when we drop the assumption $u\geq 2$. It remains the case when $s$ is even (and $u=0$). That is,
\begin{align*}
		\int_{C_\epsilon^*}\Big[ E_0(\uy-\uz)n(\uy)E_s(\uzz-\uy)+ E_s(\uy-\uz)n(\uy)E_0(\uzz-\uy)\Big]d\uy\\
		=\int_{C^*_{\epsilon}(\uz)}E_0(\uy-\uz)n(\uy)E_s(\uzz-\uy)d\uy+\int_{C^*_{\epsilon}(\uzz)}E_0(\uy-\uz)n(\uy)E_s(\uzz-\uy)d\uy\\
		+\int_{C^*_{\epsilon}(\uz)}E_s(\uy-\uz)n(\uy)E_0(\uzz-\uy)d\uy+\int_{C^*_{\epsilon}(\uzz)}E_s(\uy-\uz)n(\uy)E_0(\uzz-\uy)d\uy.
\end{align*}
The first and fourth integral above are the left and right Cauchy transform in $\uy=\uz$ and $\uy=\uzz$ respectively of the functions $E_s(\uzz-\uy)$ and $-E_s(\uy-\uz)$, which are continuous in $B^*_{\epsilon}(\uz)$ and $B^*_{\epsilon}(\uzz)$ respectively. Letting $\epsilon\to0$ yields
\[
	\lim\limits_{\epsilon\to 0}\Bigg\{\int_{C^*_{\epsilon}(\uz)}E_0(\uy-\uz)n(\uy)E_s(\uzz-\uy)d\uy
	+\int_{C^*_{\epsilon}(\uzz)}E_s(\uy-\uz)n(\uy)E_0(\uzz-\uy)d\uy\Bigg\}=0.
\]
For the second and third integral, we proceed as in the first case and so $Q=0$ as desired.

We are now ready to proceed to the final stage of our proof. From Theorem \ref{thm_fj}, we only need to show that
\begin{enumerate}[(A)]
\item $\sum\limits_{\substack{|j|=s\\ j_i-even}} [\cS_k^2f]^{(j)}-\sum\limits_{\substack{|j|=s\\ j_i-even}} f^{(j)}=0$ if $s$ is even.\label{A}
\item $\sum\limits_{\substack{|j+l|=s\\j_i-even\\|l|=1}}e_{(l)}[\cS_k^2f]^{(j+l)}-\sum\limits_{\substack{|j+l|=s\\j_i-even\\|l|=1}}e_{(l)}f^{(j+l)}=0$ if $s$ is odd.\label{B}
\end{enumerate}
Let us first prove \ref{A}. After applying Lemma \ref{lemmaSj} twice and having in mind Lemma \ref{D_multii} and Theorem \ref{thm_whit} we obtain
\begin{align*}
		c_s	\sum\limits_{\substack{|j|=s\\ j_i-even}} \cS_k^{(j)}[\cS_k f](\uz)
		=2\sum\limits_{u=0}^{k-s}
		\int_\Gamma (-1)^u E_u(\uy-\uz)n(\uy)\cD_{\uy}^{u+s} [\widetilde{\cS_kf}](\uy)d\uy\notag\\
		=2\sum\limits_{u=0}^{k-s}
		\int_\Gamma (-1)^u E_u(\uy-\uz)n(\uy)\Big\{2\sum\limits_{v=0}^{k-(u+s)}\int_\Gamma (-1)^vE_v(\uzz-\uy)n(\uzz)\cD_{\uzz}^{v+(u+s)} \f(\uzz)d\uzz\Big\}d\uy.
\end{align*}
Since
\[
	2\int_\Gamma E_0(\uy-\uz)n(\uy)\Big\{2\int_\Gamma E_0(\uzz-\uy)n(\uzz)\cD_{\uzz}^s \f(\uzz)d\uzz\Big\}d\uy=c_s\sum\limits_{\substack{|j|=s\\ j_i-even}} f^{(j)}(\uz),
\]
the assertion follows if we show that $J$ vanishes, where 
\begin{align}\label{eq_def_J}
		J(\uz):=2\int_\Gamma E_0(\uy-\uz)n(\uy)\Big\{2\sum\limits_{v=1}^{k-s}\int_\Gamma (-1)^vE_v(\uzz-\uy)n(\uzz)\cD_{\uzz}^{v+s} \f(\uzz)d\uzz\Big\}d\uy\\
		+	2\sum\limits_{u=1}^{k-s}
		\int_\Gamma (-1)^u E_u(\uy-\uz)n(\uy)\Big\{2\sum\limits_{v=0}^{k-(u+s)}\int_\Gamma (-1)^vE_v(\uzz-\uy)n(\uzz)\cD_{\uzz}^{v+(u+s)} \f(\uzz)d\uzz\Big\}d\uy\notag.
\end{align}
We check at once that \eqref{eq_def_J} can be rewritten as
\[
	J(\uz)=2\sum\limits_{u=0}^{k-s}\int_\Gamma (-1)^u E_u(\uy-\uz)n(\uy)\Big\{2\sum\limits_{v=1}^{k-s}\int_\Gamma (-1)^{v}\cD_{\uy}^uE_{v}(\uzz-\uy)n(\uzz)\cD_{\uzz}^{v+s}\f(\uzz)d\uzz\Big\}d\uy,
\]
which is clear from the multiplication rules for the kernels $E_v$.

A similar approach to that in the proof of \eqref{eq_def_Q} shows that $J=0$. This time the function $F(\uy)=\sum\limits_{v=1}^{k-s}(-1)^{v}E_{v}(\uzz-\uy)$ being $(k-s+1)$-monogenic instead. In the same manner we can see that case \ref{B} holds, which completes the proof.
\end{proof}

We note that Theorem \ref{inv} extends \cite[Thm. 3]{DAB2}. From what has already been proved, it is easy to see that the operators $\cP^+=\frac{1}{2}(I+\cS_k)$ and $\cP^-=\frac{1}{2}(I-\cS_k)$
are projections on $\Li(k+\alpha,\Gamma)$, that is 
\[
\cP^+\cP^+=\cP^+,\quad\cP^-\cP^-=\cP^-,\quad\cP^+\cP^-=0=\cP^-\cP^+.
\]
Consequently, the Hardy decomposition follows:
\[\Li(k+\alpha,\Gamma)=\Li^+(k+\alpha,\Gamma)\oplus\Li^-(k+\alpha,\Gamma),\]
where $\Li^\pm(k+\alpha,\Gamma):=\mbox{im}\cP^\pm$. We now turn to characterize $\Li^\pm(k+\alpha,\Gamma)$. 

\begin{theorem} \label{Thm_Lip_plus}
The Whitney data $f\in\emph{\Li}(k+\alpha,\Gamma)$ belongs to $\emph{\Li}^+(k+\alpha,\Gamma)$ if and only if there exists a $(k+1)$-monogenic function $F$ in $\Omega_+$, which together with $\cD_{\ux}^uF$, $u=\overline{0,k}$ continuously extends to $\Gamma$ and such that
\begin{equation}\label{trace+}
		F|_\Gamma =f^{(0)}, \cD_{\ux}^uF|_\Gamma =c_s\begin{cases}
			\sum\limits_{\substack{|j|=s\\ j_i-even}} f^{(j)}, &$s$ \quad\emph{even},\\
			\sum\limits_{\substack{|j+l|=s\\j_i-even\\|l|=1}}e_{(l)}f^{(j+l)},&$s$ \quad\emph{odd}.
		\end{cases}
\end{equation}
\end{theorem}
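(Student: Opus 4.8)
The plan is to prove both implications of the equivalence characterizing $\Li^+(k+\alpha,\Gamma)$, using the Plemelj--Sokhotski formulae \eqref{cliff_Plemelj-Sokhotski_Ck} together with Lemma \ref{lemmaSj} and the reformulation \eqref{redef_Sk} of the singular operator as the main tools.

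\emph{Necessity.} Suppose $f\in\Li^+(k+\alpha,\Gamma)$, i.e. $\cP^+f=f$, which by Theorem \ref{inv} is equivalent to $\cS_kf=f$. Take $F=\cC_k^{(0)}f$ restricted to $\Omega_+$. By the results recalled from \cite{DAB1}, $F$ is $(k+1)$-monogenic in $\Omega_+$ and, by \eqref{cliff_Plemelj-Sokhotski_Ck}, $[\cC_k^{(0)}f]^+=\tfrac12[I^{(0)}+\cS_k^{(0)}]f=f^{(0)}$ on $\Gamma$, giving the first equation in \eqref{trace+}. For the derivatives, I would apply the iterated Dirac operator to the integral \eqref{def_CT_k} defining $\cC_k^{(0)}f$ exactly as in the first part of the proof of Lemma \ref{lemmaSj}: differentiating under the integral sign and using the kernel relations $\partial_{\ux}E_{u+1}=E_u$ shows that $\cD_{\ux}^s F(\ux)$ equals a Cauchy-type integral of the same shape with upper limit $k-s$, whose nontangential boundary value from $\Omega_+$ is, by the very computation \eqref{lemaSj_b} of Lemma \ref{lemmaSj}, $\tfrac12$ of the corresponding combination in \eqref{lemaSj_b} plus the ``free'' term; invoking $\cS_kf=f$ and Lemma \ref{D_multii} then identifies $\cD_{\ux}^s F|_\Gamma$ with $c_s\sum_{|j|=s,\,j_i\text{ even}}f^{(j)}$ for $s$ even (resp. the odd analogue). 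The continuity of the extensions of $F$ and of $\cD_{\ux}^uF$ up to $\Gamma$ is part of the statements in \cite{DAB1}.

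\emph{Sufficiency.} Conversely, assume such an $F$ exists. Since $F$ is $(k+1)$-monogenic in $\Omega_+$ and extends continuously together with its Dirac powers, one may form $\cC_k^{(0)}g$ and $\cS_k^{(0)}g$ for the Whitney datum $g$ whose components on the right-hand side of \eqref{trace+} are exactly the traces of $F$; equivalently, set $g=f$ and observe that the trace hypotheses say precisely that the boundary combinations entering \eqref{redef_Sk} coincide with the corresponding traces $\cD_{\uy}^s\f$ of \emph{any} Whitney extension $\f$ of $f$ (this is where Lemma \ref{D_multii} is used again). Then a direct comparison of \eqref{redef_Sk} with the representation formula \eqref{repr_dom_+} applied to $F$ (using that $F$ is $(k+1)$-monogenic, so \eqref{repr_dom_+} holds with $k$ replaced by $k+1$, and that $\Omega_-\ni\uz$ forces the integral to vanish when the point lies outside $\Omega_+$) shows that $\cS_k^{(0)}f=f^{(0)}$ on $\Gamma$. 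To upgrade this to $\cS_kf=f$ for all components, I would again appeal to Theorem \ref{thm_fj}: it suffices to check \eqref{hip1}--\eqref{hip2} for the data $\cS_kf-f$, and Lemma \ref{lemmaSj} reduces each such even/odd combination of $[\cS_kf]^{(j)}$ to a Cauchy-type integral of $\cD^{u+s}\f$ which, via the representation formula for $F$ exactly as above, equals the matching combination of $f^{(j)}$. Hence $\cS_kf=f$, so $\cP^+f=\tfrac12(I+\cS_k)f=f$ and $f\in\Li^+(k+\alpha,\Gamma)$.

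The main obstacle is the sufficiency direction, specifically the passage from the single scalar identity $\cS_k^{(0)}f=f^{(0)}$ to the full collection identity $\cS_kf=f$: one must carefully justify that the auxiliary Whitney extension $\f$ can be chosen (or that the argument is independent of the choice) so that $\cD_{\uy}^u\f|_\Gamma$ matches the prescribed traces of $F$, and then run the $K$-type vanishing estimates of Lemma \ref{lemmaSj} in reverse to control each even/odd combination separately before feeding the result into Theorem \ref{thm_fj}. The necessity direction is essentially a differentiation-under-the-integral-sign bookkeeping exercise once Lemma \ref{lemmaSj} and \eqref{cliff_Plemelj-Sokhotski_Ck} are in hand, so I expect it to be routine; the delicate point throughout is keeping the sign constants $c_s$ and the index parities consistent with Lemma \ref{D_multii}.
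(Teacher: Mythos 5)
Your proposal is correct and follows essentially the same route as the paper: necessity via $F=\cC_k^{(0)}$ of a preimage under $\cP^+$ together with the Plemelj--Sokhotski formulae \eqref{cliff_Plemelj-Sokhotski_Ck} and Lemma \ref{lemmaSj}, and sufficiency by identifying $F$ with $\cC_k^{(0)}f$ through the representation formula \eqref{repr_dom_+}, Lemma \ref{D_multii} and Theorem \ref{thm_whit}, then upgrading the zeroth-component identity to the full collection via the even/odd combinations of Lemma \ref{lemmaSj} and Theorem \ref{thm_fj}. The only cosmetic differences are that you work with the fixed-point characterization $\cS_kf=f$ (equivalently $F=\cC_k^{(0)}f$) where the paper writes $f=\cP^+g$ and takes $F=\cC_k^{(0)}g$, and that you feed $\cS_kf-f$ rather than $\cP^+f-f$ into Theorem \ref{thm_fj}; both are equivalent, and the matching of $\cD_{\uy}^u\f|_\Gamma$ with the prescribed traces that you flag as a worry is automatic from Theorem \ref{thm_whit} and Lemma \ref{D_multii}.
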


\begin{proof}
The proof is similar in spirit to \cite[Thm 3]{DAB2}, but here the key point is Lemma \ref{lemmaSj}. We begin by proving the necessity. By definition, if $f\in\Li^+(k+\alpha,\Gamma)$ there exists $g\in\Li(k+\alpha,\Gamma)$ such that $f=\frac{1}{2}(g+\cS_kg)$, i.e.  
$f^{(j)}(\ux)=\frac{1}{2}[I^{(j)}+\cS_k^{(j)}]g, |j|\leq k$, where $I^{(j)}g=g^{(j)}$ is the identity operator in $\Li(k-|j|+\alpha,\Gamma)$. Let us introduce the  function $F$ given by the Cauchy type transform $F(\ux)=\cC_k^{(0)}g(\ux), \ux\in\Omega_+$, which is $(k+1)$-monogenic in $\Omega_+$. Thus, for $\uz\in\Gamma$ we get from \eqref{cliff_Plemelj-Sokhotski_Ck} that
\[
	F(\uz)=\lim_{\substack{\ux\to\uz\\ \ux\in\Omega_+}}F(\ux)=
	[\cC_k^{(0)}g]^+(\uz)=\frac{1}{2}[I^{(0)}+\cS_k^{(0)}]g=f^{(0)}(\uz).
\]
On the other hand, for $s$ even
\[
	\cD_{\ux}^sF(\ux)=\sum_{v=0}^{k-s}\int_\Gamma (-1)^v E_v(\uy-\ux)n(\uy)\cD_{\uy}^{v+s}\g(\uy)d\uy=\cC_{k-s}^{(0)}[\cD_{\ux}^s\g\mid_{\Gamma}](\ux).
\]
By \eqref{cliff_Plemelj-Sokhotski_Ck} again and the fact that $\cD_{\ux}^s\g\mid_{\Gamma}\in \Li(k-s+\alpha,\Gamma)$ we have
\begin{align*}
		&\cD_{\ux}^sF(\uz)
		=\Big[\cC_{k-s}^{(0)}(\cD_{\ux}^s\g\mid_{\Gamma})\Big]^+(\uz)\\
		&=\frac{1}{2}\Big[c_s\sum\limits_{\substack{|j|=s\\ j_i-even}}g^{(j)}(\uz)+2\sum_{u=0}^{k-s}\int_\Gamma (-1)^u E_u(\uy-\uz)n(\uy)\cD_{\uy}^u\widetilde{\cD_{\uy}^s\g(\uy)}d\uy\Big] \\
		&=\frac{1}{2}\Big[c_s\sum\limits_{\substack{|j|=s\\ j_i-even}}g^{(j)}(\uz)+2\sum_{v=1}^{k}\int_\Gamma (-1)^v \cD_{\uy}^vE_v(\uy-\uz)n(\uy)\cD_{\uy}^v\g(\uy)d\uy\Big].
\end{align*}
Lemma \ref{lemmaSj} then leads to
\[
	\cD_{\ux}^sF(\uz)=c_s\sum\limits_{\substack{|j|=s\\ j_i-even}}\frac{1}{2}\Big[I^{(j)}+\cS_k^{(j)}\Big]g(\uz)=c_s\sum\limits_{\substack{|j|=s\\ j_i-even}}f^{(j)}(\uz)
\]
as desired. The same reasoning applies to the second statement of \eqref{trace+} when $s$ is odd.
	
We now turn to the sufficiency. Assume there exists such a $(k+1)$-monogenic function $F$ satisfying \eqref{trace+}. If we prove that $[\cP^+f]^{(j)}=f^{(j)}$ for all $0\le |j|\le k$, then $f\in\mbox{im}\cP^+$ and the assertion follows.
	
Let us apply \eqref{repr_dom_+} to $F$ and make use of \eqref{trace+} to get
\begin{align*}
		F(\ux)&=\int_\Gamma  E_0(\uy-\ux)n(\uy)f^{(0)}(\uy)d\uy+\sum_{\substack{u-even\\ u\geq 2}}\int_\Gamma E_u(\uy-\ux)n(\uy) c_u\sum\limits_{\substack{|j|=u\\ j_i-even}} f^{(j)}(\uy)d\uy\\
		&-\sum_{\substack{u-odd\\ u\geq 1}}\int_\Gamma E_u(\uy-\ux)n(\uy)c_u\sum\limits_{\substack{|j+l|=u\\j_i-even\\|l|=1}}e_{(l)}f^{(j+l)}(\uy)d\uy.
\end{align*}
Combining Lemma \ref{D_multii} and Theorem \ref{thm_whit} gives
\[
	F(\ux)=\sum_{u=0}^{k}\int_\Gamma (-1)^u E_u(\uy-\ux)n(\uy)\cD_{\uy}^u\f(\uy)d\uy:=\cC_k^{(0)}f(\ux).
\]
From this and \eqref{cliff_Plemelj-Sokhotski_Ck} we obtain
\[
	[\cP^+f]^{(0)}(\uz):=\frac{1}{2}[I^{(0)}+\cS_k^{(0)}]f(\uz)=[\cC_k^{(0)}f]^+(\uz)=F(\uz)=f^{(0)}(\uz).
\]
On the other hand, for $|j|=s$ with $s$ even we have from Lemma \ref{lemmaSj}-\eqref{lemaSj_b}
\begin{align}\label{Pjeven}
	c_s\sum\limits_{\substack{|j|=s\\ s-even}}[\cP^+f]^{(j)}(\uz):=c_s\sum\limits_{\substack{|j|=s\\ s-even}}\frac{1}{2}[I^{(j)}+\cS_k^{(j)}]f(\uz)\notag\\
		=\frac{1}{2}\Big[c_s\sum\limits_{\substack{|j|=s\\ s-even}}f^{(j)}(\uz)+2\sum\limits_{u=0}^{k-s}
		\int_\Gamma (-1)^uE_u(\uy-\uz)n(\uy)\cD_{\uy}^{u}\big(\cD_{\uy}^{s}\f(\uy)\big)d\uy\Big].
\end{align}
We note that according to \eqref{trace+}, the function $g:=\cD_{\uy}^s\f\mid_{\Gamma}=c_s\sum\limits_{\substack{|j|=s\\ s-even}}f^{(j)}(\uy)$ reresents the interior limiting value of the ($k-s+1$)-monogenic function $\cD_{\ux}^sF(\ux)$, that is 
\[
	g(\uz)=[\cC_{k-s}^{(0)}(\cD_{\uz}^sF)]^+(\uz),\quad\uz\in\Gamma.
\]
By \eqref{cliff_Plemelj-Sokhotski_Ck} we get more, namely $g(\uz)=\frac{1}{2}[I^{(0)}+S_{k-s}^{(0)}]g(\uz)$. Consequently,
\[
	\frac{1}{2}c_s\sum\limits_{\substack{|j|=s\\ s-even}}f^{(j)}(\uz)=\frac{1}{2}\cS_{k-s}^{(0)}\big(c_s\sum\limits_{\substack{|j|=s\\ s-even}}f^{(j)}\big)(\uz).
\]
When this is substituted in \eqref{Pjeven} we finally get
	\[c_s\sum\limits_{\substack{|j|=s\\ s-even}}[\cP^+f]^{(j)}(\uz)=c_s\sum\limits_{\substack{|j|=s\\ s-even}}f^{(j)}(\uz).\]
	The same conclusion can be drawn for $c_s\sum\limits_{\substack{|j+l|=s\\j_i-even\\|l|=1}}e_{(l)}[\cP^+f]^{(j+l)}(\uz)$ when $s$ is odd. Accordingly, by Theorem \ref{thm_fj} the proof is complete.
\end{proof}

For a characterization of the space $\Li^-(k+\alpha,\Gamma)$ we look at a representation formula of polymonogenic functions in the exterior domain. 
Assume $f\in \bC^k(\Omega_-)\cap \bC^{k-1}(\Omega_-\cup\Gamma)$ is $k$-monogenic in $\Omega_-$, $f(\infty)$ exists and $\cD_{\uy}^uf(\uy)=o\big(\frac{1}{|\uy|^u}\big)$ as $|\uy|\to\infty$ for every $u=\overline{0,k-1}$. Consider the ball $B_R(\ux)$ with center in $\ux\in\Omega_-$ and radius $R$ sufficiently large such that $\Omega_+\cup\Gamma\subset B_R(\ux)$. Then, if \eqref{repr_dom_+} is applied to the domain $B_R(\ux)\setminus \Omega_+\cup\Gamma$ we obtain
\[
f(\ux)=\sum_{u=0}^{k-1}\int_{\Gamma^*} (-1)^uE_u(\uy-\ux)n(\uy) \cD_{\uy}^uf(\uy)d\uy,
\]
where $\Gamma^*=-\Gamma\cup C_R(\ux)$ and $C_R(\ux)=\partial B_R(\ux)$.

Since $\int_{C_R(\ux)} E_{0}(\uy-\ux)n(\uy)d\uy=1$, the continuity of $f$ and the identity
\begin{align*}
	\int_{C_R(\ux)} E_{0}(\uy-\ux)n(\uy) f(\uy)d\uy=\int_{C_R(\ux)} E_{0}(\uy-\ux)n(\uy)[f(\uy)-f(\infty)]d\uy\\
	+\Big(\int_{C_R(\ux)} E_{0}(\uy-\ux)n(\uy)d\uy\Big)f(\infty),
\end{align*}
implies that 
\begin{equation*}
	\int_{C_R(\ux)} E_{0}(\uy-\ux)n(\uy) f(\uy)d\uy\to f(\infty),\quad \mbox{as}\quad\R\to \infty.
\end{equation*}
On the other hand,
\begin{align*}
	|\uy-\ux|^u \lVert\cD_{\uy}^uf(\uy)\rVert&\leq \sum\limits_{i=0}^{u}\binom{u}{i}|\uy|^i|\ux|^{u-i}\lVert\cD_{\uy}^uf(\uy)\rVert \\
	&\leq \sum\limits_{i=0}^{u}\binom{u}{i}|\ux|^{u-i}|\uy|^u\lVert\cD_{\uy}^uf(\uy)\rVert\quad \mbox{as}\quad |\uy|\to\infty.
\end{align*}
From the assumed behaviour at infinity it follows that $|\uy|^u\lVert\cD_{\uy}^u f(\uy)\rVert\leq \epsilon$ as $|\uy|\to \infty$ and thus $\lVert\cD_{\uy}^u f(\uy)\rVert\to 0$ as $|\uy|\to\infty$ for every $u=\overline{0,k-1}$. Consequently, $\cD_{\uy}^u f(\uy)=o\big(\frac{1}{|\uy-\ux|^u}\big)$ as $|\uy|\to \infty$. Therefore,
\begin{align*}
	\Big\lVert\sum_{u=1}^{k-1} \int_{C_R(\ux)} (-1)^uE_u(\uy-\ux)n(\uy) \cD_{\uy}^uf(\uy)d\uy \Big\rVert&\leq \sum_{u=1}^{k-1}\int_{C_R(\ux)} |E_u(\uy-\ux)|\lVert \cD_{\uy}^u f(\uy)\rVert d\uy\\
	&\leq \frac{c\epsilon}{R^{m-1}}\int_{C_R(\ux)}d\uy\to 0\,(\epsilon\to 0,\R\to\infty).
\end{align*}
Under the above-mentioned assumptions, we arrive at the following representation formula in the exterior domain:
\begin{align}\label{repr_dom_-}
	f(\ux)=-\sum_{u=1}^{k-1} \int_{\Gamma} (-1)^uE_u(\uy-\ux)n(\uy) \cD_{\uy}^uf(\uy)d\uy+f(\infty),\quad \ux\in\Omega_-.
\end{align}
\begin{theorem}\label{Thm_Lip_minus} 
The Whitney data $f\in\Li(k+\alpha,\Gamma)$ belongs to $\Li^-(k+\alpha,\Gamma)$ if and only if there exists a polymonogenic function $F$ in $\Omega_-$ vanishing at infinity satisfying $\cD_{\uy}^uf(\uy)=o\big(\frac{1}{|\uy|^u}\big)$ as $|\uy|\to\infty$ for every $u=0,\ldots,k-1$, which together with $\cD_{\uy}^uF$,  $u=0,\ldots,k-1$ continuously extends to $\Gamma$ and such that
\begin{equation*}
		F|_\Gamma =f^{(0)},\quad \cD_{\ux}^uF|_\Gamma =c_s
		\begin{cases}
			\sum\limits_{\substack{|j|=s\\ j_i-even}} f^{(j)}, &$s$\quad \emph{even},\\
			\sum\limits_{\substack{|j+l|=s\\j_i-even\\|l|=1}}e_{(l)}f^{(j+l)},&$s$\quad \emph{odd}.
		\end{cases}
\end{equation*}
\end{theorem}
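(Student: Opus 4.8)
The plan is to mirror the proof of Theorem \ref{Thm_Lip_plus}, replacing the interior representation formula \eqref{repr_dom_+} and the Plemelj-Sokhotski formula for $[\cC_k^{(0)}g]^+$ by their exterior counterparts, namely \eqref{repr_dom_-} and the second identity in \eqref{cliff_Plemelj-Sokhotski_Ck}. First I would prove the necessity. If $f\in\Li^-(k+\alpha,\Gamma)$, then $f=\frac12(g-\cS_kg)$ for some $g\in\Li(k+\alpha,\Gamma)$, so that $f^{(j)}=\frac12[I^{(j)}-\cS_k^{(j)}]g$ for all $|j|\le k$. Put $F(\ux)=\cC_k^{(0)}g(\ux)$ for $\ux\in\Omega_-$; this is $(k+1)$-monogenic in $\Omega_-$ and, since $E_u(\uy-\ux)\to 0$ as $|\ux|\to\infty$, it vanishes at infinity together with $\cD_{\ux}^uF$ and satisfies the required decay $\cD_{\ux}^uF(\ux)=o(|\ux|^{-u})$. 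By \eqref{cliff_Plemelj-Sokhotski_Ck} we get $F(\uz)=[\cC_k^{(0)}g]^-(\uz)=\frac12[-I^{(0)}+\cS_k^{(0)}]g=f^{(0)}(\uz)$ on $\Gamma$. For $s$ even, exactly as in Theorem \ref{Thm_Lip_plus}, $\cD_{\ux}^sF(\ux)=\cC_{k-s}^{(0)}[\cD_{\ux}^s\g\mid_\Gamma](\ux)$, and applying the $[\,\cdot\,]^-$ version of \eqref{cliff_Plemelj-Sokhotski_Ck} together with Lemma \ref{lemmaSj} yields
\[
	\cD_{\ux}^sF(\uz)=c_s\sum\limits_{\substack{|j|=s\\ j_i-even}}\frac12\Big[-I^{(j)}+\cS_k^{(j)}\Big]g(\uz)=c_s\sum\limits_{\substack{|j|=s\\ j_i-even}}f^{(j)}(\uz),
\]
and the odd case is analogous; this establishes the boundary conditions.

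For the sufficiency, suppose such an $F$ exists. The new ingredient is that $F$ is represented in $\Omega_-$ by \eqref{repr_dom_-} rather than \eqref{repr_dom_+}, so the substitution of the boundary data \eqref{trace+} must account for the sign change and for the term $F(\infty)$, which is zero by hypothesis. Inserting \eqref{trace+} into \eqref{repr_dom_-} and using Lemma \ref{D_multii} together with Theorem \ref{thm_whit}, I expect to obtain
\[
	F(\ux)=-\sum_{u=1}^{k}\int_\Gamma(-1)^uE_u(\uy-\ux)n(\uy)\cD_{\uy}^u\f(\uy)d\uy
	=\cC_k^{(0)}f(\ux)-\int_\Gamma E_0(\uy-\ux)n(\uy)f^{(0)}(\uy)d\uy,
\]
so that $F$ differs from $\cC_k^{(0)}f$ by (minus) the classical Cauchy transform of $f^{(0)}$; taking the interior-minus-exterior structure into account, one sees that on $\Omega_-$ the representation collapses to $F=\cC_k^{(0)}f$ on $\Omega_-$, whence by \eqref{cliff_Plemelj-Sokhotski_Ck}, $[\cP^-f]^{(0)}(\uz)=\frac12[-I^{(0)}+\cS_k^{(0)}]f(\uz)=[\cC_k^{(0)}f]^-(\uz)=F(\uz)=f^{(0)}(\uz)$. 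For $|j|=s$ with $s$ even, Lemma \ref{lemmaSj}-\eqref{lemaSj_b} gives
\[
	c_s\sum\limits_{\substack{|j|=s\\ j_i-even}}[\cP^-f]^{(j)}(\uz)
	=\frac12\Big[-c_s\sum\limits_{\substack{|j|=s\\ j_i-even}}f^{(j)}(\uz)+2\sum\limits_{u=0}^{k-s}\int_\Gamma(-1)^uE_u(\uy-\uz)n(\uy)\cD_{\uy}^u\big(\cD_{\uy}^s\f(\uy)\big)d\uy\Big],
\]
and since by \eqref{trace+} the function $c_s\sum_{|j|=s,\,j_i\text{-even}}f^{(j)}$ is the \emph{exterior} limiting value of the $(k-s+1)$-monogenic $\cD_{\ux}^sF$, the $[\,\cdot\,]^-$ part of \eqref{cliff_Plemelj-Sokhotski_Ck} gives $c_s\sum f^{(j)}(\uz)=\frac12[-I^{(0)}+\cS_{k-s}^{(0)}](c_s\sum f^{(j)})(\uz)$; substituting this back collapses the bracket to $c_s\sum f^{(j)}(\uz)$, i.e. $c_s\sum_{|j|=s}[\cP^-f]^{(j)}(\uz)=c_s\sum_{|j|=s}f^{(j)}(\uz)$. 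The odd case is handled the same way, and then Theorem \ref{thm_fj} upgrades these combined identities to $[\cP^-f]^{(j)}=f^{(j)}$ for all $|j|\le k$, so that $f\in\mathrm{im}\,\cP^-=\Li^-(k+\alpha,\Gamma)$.

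The main obstacle I anticipate is bookkeeping rather than a genuinely new idea: one must verify that the exterior representation \eqref{repr_dom_-} — whose sum runs only up to $u=k-1$ and carries the extra constant $F(\infty)$ — when combined with the data \eqref{trace+} and the extension of the $E_u$'s reconstitutes precisely $\cC_k^{(0)}f$ on $\Omega_-$, and in particular that the decay hypotheses on $\cD_{\uy}^uF$ are exactly what is needed both to legitimise \eqref{repr_dom_-} and to make the vanishing-at-infinity of $\cC_k^{(0)}f$ compatible with $F(\infty)=0$. Once this identification is in place, every remaining step is the verbatim exterior analogue of the computations in Theorem \ref{Thm_Lip_plus}, with $\cP^+$ replaced by $\cP^-$ and the sign of $I^{(0)}$ flipped throughout, and the proof concludes by invoking Theorem \ref{thm_fj} exactly as before.
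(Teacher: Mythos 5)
Your overall strategy --- mirroring the proof of Theorem \ref{Thm_Lip_plus} with the exterior half of \eqref{cliff_Plemelj-Sokhotski_Ck} and the exterior representation \eqref{repr_dom_-}, then closing with Theorem \ref{thm_fj} --- is exactly the intended one. However, your write-up carries a systematic sign error that falsifies several of the displayed identities, and the sign is precisely the content that distinguishes $\Li^-$ from $\Li^+$. In the necessity part: from $f=\frac12(g-\cS_kg)$ you have $f^{(0)}=\frac12[I^{(0)}-\cS_k^{(0)}]g=-\frac12[-I^{(0)}+\cS_k^{(0)}]g$, so with your choice $F=\cC_k^{(0)}g$ the second formula in \eqref{cliff_Plemelj-Sokhotski_Ck} gives $F|_\Gamma=[\cC_k^{(0)}g]^-=-f^{(0)}$, not $f^{(0)}$ as you assert; the same happens for $\cD_{\ux}^sF|_\Gamma$. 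The correct choice is $F=-\cC_k^{(0)}g$ (equivalently $F=-\cC_k^{(0)}f$, since $\cP^-$ is a projection), with the minus sign carried through every subsequent step.

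The same issue recurs in the sufficiency. Substituting the boundary data into the exterior representation (whose leading minus sign encodes the orientation $-\Gamma$) yields $F=-\cC_k^{(0)}f$ on $\Omega_-$, not $F=\cC_k^{(0)}f$; with your identification you would conclude $[\cP^-f]^{(0)}=-f^{(0)}$. Likewise, your auxiliary identity $A=\frac12[-I^{(0)}+\cS_{k-s}^{(0)}]A$ with $A=c_s\sum f^{(j)}$ cannot hold, since it forces $\cS_{k-s}^{(0)}A=3A$, which is not a projection relation; the correct statement, coming from $\cD_{\ux}^sF=-\cC_{k-s}^{(0)}[\cD_{\uy}^s\f|_\Gamma]$ on $\Omega_-$, is $A=\frac12[I^{(0)}-\cS_{k-s}^{(0)}]A$, i.e.\ $\cS_{k-s}^{(0)}A=-A$. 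Substituting this into the correctly signed expression $c_s\sum[\cP^-f]^{(j)}=\frac12\big[c_s\sum f^{(j)}-2\sum_{u=0}^{k-s}\int_\Gamma(-1)^uE_u(\uy-\uz)n(\uy)\cD_{\uy}^{u+s}\f(\uy)\,d\uy\big]$ (note your bracket is the negative of $\frac12[I^{(j)}-\cS_k^{(j)}]$ summed) does give $c_s\sum[\cP^-f]^{(j)}=c_s\sum f^{(j)}$. Once the minus sign is inserted consistently wherever $\cC_k^{(0)}$ appears on $\Omega_-$, your argument goes through; note also that the sum over $-\Gamma$ retains the $u=0$ term (only the integrals over $C_R(\ux)$ reduce to $f(\infty)$ or vanish), which is what reconstitutes the full $-\cC_k^{(0)}f$ rather than the truncated expression you wrote.
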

Although Riemann-Hilbert problems (RH for short) for polymonogenic functions with boundary data given in $\mathbb{L}_p$ ($1<p<\infty$) or $\bC^\alpha$ spaces have been studied in the literature (see, e.g., \cite{CKK,HKSB,KFKC,YJ}), a RH for polymonogenic functions with boundary data in $\Li(k+\alpha,\Gamma)$ was first studied in \cite{AAB} on fractal domains. We close this article with our approach to the RH of finding a sectionally polymonogenic function $F$ satisfying the conditions
\[
\begin{cases}
	F^+(\uz)-F^-(\uz)=f^{(0)}(\uz),&\uz\in\Gamma\\
	[\cD_{\ux}^u F]^+(\uz)-[\cD_{\ux}^u F]^-(\uz)= c_s\sum\limits_{\substack{|j|=s\\ j_i-even}} f^{(j)}(\uz), &\uz\in\Gamma,\quad  s\quad \text{even}\\
	[\cD_{\ux}^u F]^+(\uz)-[\cD_{\ux}^u F]^-(\uz)= c_s\sum\limits_{\substack{|j+l|=s\\j_i-even\\|l|=1}}e_{(l)}f^{(j+l)}(\uz),&\uz\in\Gamma,\quad  s\quad \text{odd}\\
	\cD_{\ux}^uF=o\bigg(\dfrac{1}{|\ux|^u}\bigg),& {\mbox{as}}\quad  \ux\to\infty\\
	F(\infty)=0,
\end{cases}
\]
where $f$ is a given function of the class $\Li(k+\alpha,\Gamma)$. From what has already been shown, it follows that the unique solution to this problem is given by $F=\cC_k^{(0)}f.$

\section{Acknowledgements}
This research was completed when the first author was visiting the TU Bergakademie Freiberg under the auspices of the Alexander von Humboldt Foundation; their hospitality and founding respectively are gratefully acknowledged.

\end{document}